\newtheorem{definition}{Definition}[section] 
\newtheorem{theorem}{Theorem}[section]
\newtheorem{corollary}{Corollary}[section]
\newtheorem{lemma}[theorem]{Lemma} 
\newtheorem{proposition}[theorem]{Proposition} 
\newtheorem{remark}[theorem]{Remark}
\title{Reciprocal Sequences as CM Sequences}
\author{
  Reza Rezaie and X. Rong Li \\
 Department of Electrical Engineering\\
 University of New Orleans\\
New Orleans, LA 70148 \\
  \texttt{rrezaie@uno.edu} and \texttt{xli@uno.edu} \\
}
\begin{document}

\maketitle

\begin{abstract}
The conditionally Markov (CM) sequence contains several classes, including the reciprocal sequence. Reciprocal sequences have been widely used in many areas of engineering, including image processing, acausal systems, intelligent systems, and intent inference. In this paper, the reciprocal sequence is studied from the CM sequence point of view, which is different from the viewpoint of the literature and leads to more insight into the reciprocal sequence. Based on this viewpoint, new results, properties, and easily applicable tools are obtained for the reciprocal sequence. The nonsingular Gaussian (NG) reciprocal sequence is modeled and characterized from the CM viewpoint. It is shown that a NG sequence is reciprocal if and only if it is both $CM_L$ and $CM_F$ (two special classes of CM sequences). New dynamic models are presented for the NG reciprocal sequence. These models (unlike the existing one, which is driven by colored noise) are driven by white noise and are easily applicable. As a special reciprocal sequence, the Markov sequence is also discussed. Finally, it can be seen how all CM sequences, including Markov and reciprocal, are unified.
\end{abstract}

\textbf{Keywords:} Conditionally Markov (CM) sequence, reciprocal sequence, Markov sequence, Gaussian sequence, dynamic model, characterization.

\section{Introduction}

The CM process is a very large set and it contains reciprocal and Markov processes as two important special cases. Reciprocal processes have been used in many different areas of science and engineering (e.g., \cite{Levy_1}--\cite{Krener1}), where stochastic processes more general than Markov processes are needed. Applications of reciprocal processes in image processing were discussed in \cite{Picci}--\cite{Picci2}. Based on quantized state space, \cite{Fanas1}--\cite{Fanas2} used finite-state reciprocal sequences for detection of anomalous trajectory pattern and intent inference. The idea of the reciprocal process was utilized in \cite{Simon} for intent inference in intelligent interactive displays of vehicles. \cite{DD_Conf}--\cite{DW_Conf} proposed some classes of CM sequences (including reciprocal) for trajectory modeling. In \cite{Krener1}, the behavior of acausal systems was described using reciprocal processes.

This paper looks at the reciprocal sequence from the viewpoint of the CM sequence, which is a larger set of sequences. This point of view, which is different from that of the literature on the reciprocal sequence, reveals more properties of the reciprocal sequence and leads to a better insight and easily applicable results.

The notion of CM processes was introduced in \cite{Mehr} for Gaussian processes based on mean and covariance functions. Also, some Gaussian CM processes were defined based on conditioning at the first index (time) of the CM interval. \cite{Mehr} considered Gaussian processes being nonsingular on the interior of the time interval. Also, stationary Gaussian CM processes were characterized, and construction of some non-stationary Gaussian CM processes was discussed. \cite{ABRAHAM} extended the definition of Gaussian CM processes (presented in \cite{Mehr}) to the general (Gaussian/non-Gaussian) case. In addition, it was shown how continuous time Gaussian CM processes can be represented by a Wiener process and an uncorrelated Gaussian random vector \cite{ABRAHAM}, \cite{Mehr}. In \cite{CM_Part_I}, different (Gaussian/non-Gaussian) CM sequences based on conditioning at the first or the last time of the CM interval were defined, and (stationary/non-stationary) NG CM sequences were studied. Also, dynamic models and characterizations of NG CM sequences were presented in \cite{CM_Part_I}. Based on a valuable observation, \cite{ABRAHAM} commented on the relationship between the Gaussian CM process and the Gaussian reciprocal process. Following the comment of \cite{ABRAHAM}, \cite{Carm}--\cite{Carm2} obtained some results about continuous time Gaussian reciprocal processes.

The notion of reciprocal processes was introduced in \cite{Berstein} connected to a problem posed by E. Schrodinger \cite{Schrodinger_1}--\cite{Schrodinger_2}. Later, reciprocal processes were studied more in \cite{Slepian}--\cite{CM_Part_II_A} and others. A dynamic model and a characterization of the NG reciprocal sequence were presented in \cite{Levy_Dynamic} (which is the most significant paper on the Gaussian reciprocal sequence related to our work). It was shown that the evolution of the NG reciprocal sequence can be described by a second-order nearest-neighbor model driven by locally correlated dynamic noise \cite{Levy_Dynamic}. That model can be considered a generalization of the Markov model. However, due to the correlation of the dynamic noise as well as the nearest-neighbor structure, it is not necessarily easy to apply \cite{DD_Conf}--\cite{DW_Conf}. Based on this second-order model, a characterization of the NG reciprocal sequence in terms of its covariance matrix was obtained in \cite{Levy_Dynamic}. 

Consider stochastic sequences defined over the interval $[0,N]=$  $\lbrace 0,1,\ldots,N \rbrace$. For convenience, let the index be time. A sequence is Markov if and only if (iff) conditioned on the state at any time $k$, the subsequences before and after $k$ are independent. A sequence is reciprocal iff conditioned on the states at any two times $k_1$ and $k_2$, the subsequences inside and outside the interval $[k_1,k_2]$ are independent. In other words, inside and outside are independent given the boundaries. A sequence is CM over $[k_1,k_2]$ iff conditioned on the state at any time $k_1$ ($k_2$), the sequence is Markov over $(k_1,k_2]$ ($[k_1,k_2)$). The Markov sequence and the reciprocal sequence are two important classes of the CM sequence.

The contributions of this paper are as follows. The reciprocal sequence is viewed as a special CM sequence. Studying, modeling, and characterizing the reciprocal sequence from this viewpoint are different from those of \cite{Levy_Dynamic} and in the literature. This fruitful angle has several advantages. It provides more insight into the reciprocal sequence and its relation to other CM sequences. New results regarding the reciprocal sequence, as a special CM sequence, are obtained. More specifically, the relationship between the (Gaussian/non-Gaussian) reciprocal sequence and the CM sequence is presented. It is shown that a NG sequence is reciprocal iff it is both $CM_L$ and $CM_F$. In other words, it is discussed how different classes of CM sequences contribute to the construction of the reciprocal sequence. New dynamic models for the NG reciprocal sequence are obtained based on CM models. These models driven by white (rather than colored) noise are easily applicable. Also, it is discussed under what conditions these models govern NG Markov sequences. 

The paper is organized as follows. Section \ref{Section_Definitions_Preliminaries} reviews definitions of CM, reciprocal, and Markov sequences as well as some results required for later sections. Results obtained in Section \ref{Section_Reciprocal_Characterziation} and Section \ref{Section_Reciprocal_Dynamic} are for NG sequences except Theorem \ref{CM_iff_Reciprocal}, which is for the general (Gaussian/non-Gaussian) case. In Section \ref{Section_Reciprocal_Characterziation}, the reciprocal sequence is studied from the CM viewpoint. In Section \ref{Section_Reciprocal_Dynamic}, based on CM models, new dynamic models for the NG reciprocal sequence are obtained. Section \ref{Section_Summary_Conclusions} contains a summary and conclusions.

\section{Definitions and Preliminaries}\label{Section_Definitions_Preliminaries}

\subsection{Conventions}\label{Subsection_Convention}

Throughout the paper we consider stochastic sequences defined over the interval $[0,N]$, which is a general discrete index interval. For convenience this discrete index is called time. The following conventions are used throughout the paper.  
\begin{align*}
[i,j]& \triangleq \lbrace i,i+1,\ldots ,j-1,j \rbrace
\end{align*}
\begin{align*}
(i,j)& \triangleq \lbrace i+1,i+2,\ldots ,j-2,j-1 \rbrace\\
[x_k]_{i}^{j} & \triangleq \lbrace x_k, k \in [i,j] \rbrace\\
[x_k] & \triangleq [x_k]_{0}^{N}\\
i,j,k_1,k_2, l_1, l_2& \in [0,N], k_1<k_2, i<j
\end{align*}
where $k$ in $[x_k]_i^j$ is a dummy variable. The symbol ``$ \setminus $" is used for set subtraction. $C_{l_1,l_2}$ is a covariance function. $C$ is the covariance matrix of the whole sequence $[x_k]$. Also, $0$ may denote a zero scalar, vector, or matrix, as is clear from the context. $F(\cdot | \cdot)$ denotes the conditional cumulative distribution function (CDF). For a matrix $A$, $A_{[r_1:r_2,c_1:c_2]}$ denotes its submatrix consisting of (block) rows $r_1$ to $r_2$ and (block) columns $c_1$ to $c_2$ of $A$.

The abbreviations ZMNG and NG are used for ``zero-mean nonsingular Gaussian" and ``nonsingular Gaussian", respectively.

\subsection{Definitions and Notations}\label{Definitions}

Formal definitions of CM sequences can be found in \cite{CM_Part_I}. Here we present the definitions in a simple language. A sequence $[x_k]$ is $[k_1,k_2]$-$CM_c, c \in \lbrace k_1,k_2 \rbrace$ (i.e., CM over $[k_1,k_2]$) iff conditioned on the state at any time $k_1$ ($k_2$), the sequence is Markov over $(k_1,k_2]$ ($[k_1,k_2)$). The above definition is equivalent to the following lemma \cite{CM_Part_I}.

\begin{lemma}\label{CMc_CDF}
$[x_k]$ is $[k_1,k_2]$-$CM_c, c \in \lbrace k_1,k_2 \rbrace$, iff 
 \begin{align}
 F(\xi _k|[x_{i}]_{k_1}^{j},x_{c})=F(\xi _k|x_j,x_c)\label{CDF_1}
 \end{align} 
for every $j,k \in [k_1,k_2], j<k$, or equivalently,
 \begin{align}
 F(\xi _k|[x_{i}]_{j}^{k_2},x_c)=F(\xi _k|x_j,x_{c})\label{CDF_2}
 \end{align} 
for every $k,j \in [k_1,k_2], k< j$, and every $\xi _k \in \mathbb{R}^d$, where $d$ is the dimension of $x_k$.  

\end{lemma}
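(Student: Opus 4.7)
My plan is to split the biconditional into two parts: (i) the definition of $[k_1,k_2]$-$CM_c$ is equivalent to (\ref{CDF_1}); and (ii) (\ref{CDF_1}) is equivalent to (\ref{CDF_2}). The two values of $c$ will be handled in parallel, since the arguments are structurally symmetric once the conditioning variable is fixed.

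For step (i), I would translate the Markov property in the definition directly into a CDF statement. Taking $c=k_1$ for concreteness, the definition asserts that, conditioned on $x_{k_1}$, the sub-sequence $x_{k_1+1},\ldots,x_{k_2}$ is Markov, i.e.\ $F(\xi_k\mid x_{k_1+1},\ldots,x_j,x_{k_1})=F(\xi_k\mid x_j,x_{k_1})$ for $k_1<j<k\le k_2$. Since $[x_i]_{k_1}^{j}$ is the list $x_{k_1},x_{k_1+1},\ldots,x_j$, which already contains $x_c=x_{k_1}$, this is precisely (\ref{CDF_1}) on the nontrivial range; the edge case $j=k_1$ makes (\ref{CDF_1}) a tautology. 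The argument for $c=k_2$ is parallel, with the trivial edge case being $k=k_2$.

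For step (ii), the key idea is that (\ref{CDF_1}) and (\ref{CDF_2}) express the forward and backward Markov properties, respectively, of the same conditional sub-sequence (relative to $x_c$), and these two are equivalent. Concretely, I would use the chain rule together with (\ref{CDF_1}) to obtain the conditional factorization
\begin{align*}
F([x_m]_{k_1}^{k_2}\mid x_c)=\prod_m F(x_m\mid x_{m-1},x_c)
\end{align*}
over the appropriate one-step transitions in the sub-interval determined by $c$, then expand the same joint conditional CDF in reverse order and apply Bayes on each factor to recover (\ref{CDF_2}); the converse direction mirrors this.

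The main obstacle is step (ii) in full generality (allowing non-Gaussian distributions): the manipulations must be phrased in terms of regular conditional CDFs, and each application of Bayes requires the conditioning events to be well-defined. Once the symmetric factorization is established, however, the forward-backward equivalence falls out of it essentially by inspection.
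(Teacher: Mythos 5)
The paper itself does not prove this lemma---it is quoted from \cite{CM_Part_I} as a known restatement of the definition---so there is no in-paper argument to compare against; I can only assess your proposal on its own terms. In outline it is sound and is the natural argument: part (i) is indeed just an unwinding of the definition (with the observation that $[x_i]_{k_1}^{j}$ already contains $x_{k_1}$, and that the endpoint cases $j=k_1$ for $c=k_1$, resp.\ $k=k_2$ for $c=k_2$, are tautologies), and part (ii) is the classical fact that the forward and backward Markov properties of a sequence are equivalent, applied here to the conditional law of $[x_i]_{k_1}^{k_2}$ given $x_c$.

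One concrete technical flaw to fix in part (ii): the displayed factorization $F([x_m]_{k_1}^{k_2}\mid x_c)=\prod_m F(x_m\mid x_{m-1},x_c)$ is not a valid identity for CDFs. Joint CDFs do not factor into pointwise products of conditional CDFs (that chain rule holds for densities, or for measures via disintegration, but $P(X_1\le a, X_2\le b)$ is an integral $\int_{-\infty}^{a}P(X_2\le b\mid X_1=u)\,dF_{X_1}(u)$, not a product), so "apply Bayes on each factor" has no meaning at the level of CDFs. The standard and fully general repair is to bypass the product entirely: show that \eqref{CDF_1} is equivalent to the symmetric statement that, given $x_c$ and $x_m$, the blocks $[x_i]_{k_1}^{m-1}$ and $[x_i]_{m+1}^{k_2}$ are conditionally independent for every $m$ in the relevant subinterval (this is the measure-theoretic ``past $\perp$ future given present'' characterization of the Markov property, proved with conditional expectations rather than CDFs), and then note that this symmetric statement is manifestly direction-free, hence equivalent to \eqref{CDF_2} by the same token. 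You gesture at this obstacle yourself, but as written the key step of your plan rests on an identity that is false for CDFs, so the proof is incomplete until it is rephrased in terms of conditional independence or conditional expectations.
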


The interval $[k_1,k_2]$ of the $[k_1,k_2]$-$CM_c$ sequence is called the \textit{CM interval} of the sequence.

\begin{remark}\label{R_CMN}
For the forward direction, we have
\begin{align*}
[k_1,k_2]\text{-}CM_c=\left\{ \begin{array}{cc} 
[k_1,k_2]\text{-}CM_F & \text{if} \quad  c=k_1\\ \relax
[k_1,k_2]\text{-}CM_L & \text{if} \quad  c=k_2
\end{array} \right.
\end{align*}
where the subscript ``$F$" or ``$L$" is used because the conditioning is at the \textit{first} or \textit{last} time of the CM interval. 

\end{remark}

\begin{remark}\label{R_CMN_2}
The CM interval of a sequence is dropped if it is the whole time interval: our shorthand for the $[0,N]$-$CM_c$ sequence is $CM_c$.

\end{remark}

In the forward direction, a $CM_0$ ($CM_N$) sequence is called a $CM_F$ ($CM_L$) sequence. 

Corresponding to different values of $k_1, k_2 \in [0,N]$, and $c \in \lbrace k_1,k_2 \rbrace$, there are different classes of CM sequences. For example, $CM_F$ and $(0,N]$-$CM_L$ are two classes. By a $CM_F \cap (0,N]$-$CM_L$ sequence we mean a sequence which is both $CM_F$ and $(0,N]$-$CM_L$.

The reciprocal sequence is as follows. A sequence is reciprocal iff the subsequences inside and outside the interval $[k_1,k_2]$ are independent conditioned on the boundaries $x_{k_1}$ and $x_{k_2}$ ($\forall k_1,k_2 \in [0,N]$). The above definition is equivalent to the following lemma.

\begin{lemma}\label{CDF}
$[x_k]$ is reciprocal iff 
\begin{align}
F(\xi _k|[x_{i}]_{0}^{j},[x_i]_l^N)=F(\xi _k|x_j,x_l)\label{CDF_1}
\end{align} 
for every $j,k,l \in [0,N]$ ($j < k < l$), and every $\xi _k \in \mathbb{R}^d$, where $d$ is the dimension of $x_k$.  

\end{lemma}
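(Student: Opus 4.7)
My plan is to prove both directions of the ``iff'' separately, noting that the forward implication is essentially a restatement of the definition while the reverse direction requires reconstructing the joint conditional distribution from one-step conditionals.

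For the forward direction, I would fix any $j,k,l \in [0,N]$ with $j<k<l$ and apply the reciprocal property with $k_1=j$ and $k_2=l$. By the definition, the ``inside'' block $[x_i]_{j+1}^{l-1}$ (which contains $x_k$) and the ``outside'' block $[x_i]_0^{j-1}\cup [x_i]_{l+1}^N$ are conditionally independent given the boundary pair $(x_j,x_l)$. Hence conditioning $x_k$ further on those outside variables beyond $(x_j,x_l)$ adds no information, which gives \eqref{CDF_1} directly.

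For the reverse direction, I would fix $k_1<k_2$ and show that the joint conditional CDF of the inside $[x_i]_{k_1+1}^{k_2-1}$ given the entire outside $[x_i]_0^{k_1},[x_i]_{k_2}^N$ coincides with the joint conditional CDF given only $(x_{k_1},x_{k_2})$. Using the chain rule for conditional distributions, I would factor
\begin{align*}
F\bigl([x_i]_{k_1+1}^{k_2-1}\,\big|\,[x_i]_0^{k_1},[x_i]_{k_2}^N\bigr)=\prod_{k=k_1+1}^{k_2-1} F\bigl(x_k\,\big|\,[x_i]_0^{k-1},[x_i]_{k_2}^N\bigr),
\end{align*}
and apply \eqref{CDF_1} with $j=k-1$, $l=k_2$ to each factor, reducing the right-hand side to $\prod_k F(x_k|x_{k-1},x_{k_2})$. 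The same chain-rule expansion of $F([x_i]_{k_1+1}^{k_2-1}|x_{k_1},x_{k_2})$ produces factors $F(x_k|[x_i]_{k_1}^{k-1},x_{k_2})$, and a marginalization argument (integrating the full conditional over $[x_i]_0^{k_1-1}$ and $[x_i]_{k_2+1}^N$, or invoking the tower property) shows that these also equal $F(x_k|x_{k-1},x_{k_2})$. Hence the two product representations agree, which is exactly the conditional independence of inside and outside given the boundaries, i.e.\ the reciprocal property.

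The main obstacle is the reverse direction: \eqref{CDF_1} is a statement about a single coordinate $x_k$, whereas reciprocality is a statement about the joint law of the entire inside segment, so some care is needed to bridge this gap. The telescoping of the joint CDF into one-step conditionals handles the ``vertical'' extension across all $k\in(k_1,k_2)$, while a marginalization step is needed to convert the ``full outside'' conditioning in the hypothesis to the ``boundary only'' conditioning that appears after splitting the RHS via the chain rule. Once these two bookkeeping steps are done, matching the factors is immediate.
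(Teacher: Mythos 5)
Your proof is correct. Note first that the paper itself offers no proof of this lemma: it is stated as an equivalent reformulation of the definition (with the underlying arguments deferred to the cited reference), so there is no in-paper argument to compare against. Your two-direction structure is the standard one. The forward direction is indeed immediate from the definition of reciprocality with $k_1=j$, $k_2=l$. The reverse direction is where the content lies, and your two bookkeeping steps are exactly the right ones: (a) telescoping the joint conditional law of the inside segment into one-step conditionals and applying \eqref{CDF_1} with $j=k-1$, $l=k_2$ to each; and (b) the tower-property step, which works because $F(\xi_k|x_{k-1},x_{k_2})$ is measurable with respect to $\sigma([x_i]_{k_1}^{k-1},x_{k_2})$ (since $k-1\ge k_1$), so conditioning it further changes nothing. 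The conclusion that equality of $F(\text{inside}\,|\,\text{full outside})$ and $F(\text{inside}\,|\,x_{k_1},x_{k_2})$ is precisely the asserted conditional independence is also standard.

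One small caveat: a joint conditional CDF does not literally factor as a product of one-step conditional CDFs; the chain rule for distributions is an iterated conditional expectation, $F(\xi_{k_1+1},\ldots,\xi_{k_2-1}\,|\,\mathcal{F}) = E\bigl[\mathbf{1}_{\{x_{k_1+1}\le \xi_{k_1+1}\}}\cdots F(\xi_{k_2-1}\,|\,[x_i]_0^{k_2-2},[x_i]_{k_2}^N)\,\big|\,\mathcal{F}\bigr]$, rather than a product of marginal one-step CDFs. Your argument survives this correction verbatim — since each innermost conditional depends only on $(x_{k-1},x_{k_2})$, the two iterated expectations collapse to the same expression — but the displayed product formula should be read as shorthand for that iteration, not as a literal identity. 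You should also note the trivial cases $k_2\le k_1+1$ where the inside is empty.
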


\begin{lemma}\label{CDF_M}
$[x_k]$ is Markov iff 
 \begin{align}
 F(\xi _k|[x_{i}]_{0}^{j})=F(\xi _k|x_j)\label{CDF_1_M}
 \end{align} 
for every $j,k \in [0,N], j<k$, or equivalently,
 \begin{align}
 F(\xi _k|[x_{i}]_{j}^{N})=F(\xi _k|x_j)\label{CDF_2_M}
 \end{align} 
for every $k,j \in [0,N], k< j$, and every $\xi _k \in \mathbb{R}^d$, where $d$ is the dimension of $x_k$.  

\end{lemma}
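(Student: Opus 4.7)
The plan is to prove the lemma as a simplified version of Lemma \ref{CMc_CDF}: the Markov property is essentially the CM property with CM interval $[0,N]$ and no extra conditioning variable $x_c$. I would split the argument into three natural steps: (i) the informal Markov definition implies (\ref{CDF_1_M}); (ii) it implies (\ref{CDF_2_M}); and (iii) either of these CDF conditions implies the Markov property. Once all three hold, the two conditions are equivalent.

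For step (i), the informal definition states that for any time $j$, the past $[x_i]_0^{j-1}$ and the future $[x_i]_{j+1}^N$ are conditionally independent given $x_j$. For any $k>j$, $x_k$ is a function of the future, so conditioning on $x_j$ alone yields the same law as further conditioning on $[x_i]_0^{j-1}$. This directly gives $F(\xi_k | [x_i]_0^j) = F(\xi_k | x_j)$. Step (ii) is the symmetric argument: for $k<j$, $x_k$ lies in the past of $j$, and the same conditional independence yields (\ref{CDF_2_M}).

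For step (iii), I would derive the Markov property from (\ref{CDF_1_M}) via the chain rule for conditional CDFs
\begin{align*}
F([x_i]_{j+1}^N \mid [x_i]_0^j) = \prod_{k=j+1}^N F(\xi_k \mid [x_i]_0^{k-1}).
\end{align*}
Applying (\ref{CDF_1_M}) to each factor with $k-1$ in the role of $j$ reduces each term to $F(\xi_k \mid x_{k-1})$. Hence the conditional law of the future given the past depends on $[x_i]_0^j$ only through $x_j$, i.e., $F([x_i]_{j+1}^N \mid [x_i]_0^j) = F([x_i]_{j+1}^N \mid x_j)$. Multiplying this by $F([x_i]_0^{j-1} \mid x_j)$ and comparing with the Bayes factorization of $F([x_i]_0^{j-1},[x_i]_{j+1}^N \mid x_j)$ yields the conditional independence of past and future given $x_j$, which is the Markov property. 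The implication (\ref{CDF_2_M}) $\Rightarrow$ Markov follows by the analogous reverse-time chain rule.

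The only delicate point, and the main obstacle, is carrying out the chain-rule manipulation rigorously at the level of conditional CDFs rather than densities. This is routine if one works with regular conditional distributions, and becomes entirely elementary in the NG setting of the rest of the paper where densities exist; I would therefore present the argument at the density level and remark that the general case follows by the standard measure-theoretic version of the chain rule.
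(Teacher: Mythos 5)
The paper does not actually prove Lemma \ref{CDF_M}: like Lemmas \ref{CMc_CDF} and \ref{CDF}, it is stated as a standard restatement of the defining conditional-independence property, with the formal treatment deferred to the cited references. So there is no proof in the paper to compare against; judged on its own, your argument is the standard one and it is sound. Steps (i) and (ii) are immediate from the definition of conditional independence of past and future given the present. For step (iii), the displayed ``chain rule for conditional CDFs'' is not literally valid as written --- a joint conditional CDF is not a product of one-variable conditional CDFs, and your factors condition on the realized $[x_i]_0^{k-1}$ where they should condition on the arguments $\xi_{j+1},\dots,\xi_{k-1}$ together with $x_0,\dots,x_j$ --- but you correctly flag this as the delicate point, and the repair you propose (work at the density level, which is available in the nonsingular Gaussian setting of the paper, or with regular conditional distributions in general) is exactly right: from \eqref{CDF_1_M} with $j=k-1$ one gets $p(\xi_{j+1},\dots,\xi_N\mid x_0,\dots,x_j)=\prod_{k}p(\xi_k\mid\xi_{k-1})$, so the conditional law of the future depends on the past only through $x_j$, and the Bayes factorization you describe then gives conditional independence of past and future given $x_j$. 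One detail worth stating explicitly in a final write-up: condition \eqref{CDF_1_M} is imposed for all $j<k$, while your chain-rule step uses only the one-step case $j=k-1$; this is harmless because the converse direction (your step (i)) recovers the condition for all $j<k$ from the Markov property, so the full equivalence closes.
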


\subsection{Preliminaries}\label{Preliminaries}

We review some results required later \cite{CM_Part_I}. 

\begin{definition}\label{CMc_Matrix}
A symmetric positive definite matrix is called $CM_L$ if it has form $\eqref{CML}$ and $CM_F$ if it has form $\eqref{CMF}$.
\begin{align}\left[
\begin{array}{ccccccc}
A_0 & B_0 & 0 & \cdots & 0 & 0 & D_0\\
B_0' & A_1 & B_1 & 0 & \cdots & 0 & D_1\\
0 & B_1' & A_2 & B_2 & \cdots & 0 & D_2\\
\vdots & \vdots & \vdots & \vdots & \vdots & \vdots & \vdots\\
0 & \cdots & 0 & B_{N-3}' & A_{N-2}  & B_{N-2} & D_{N-2}\\
0 & \cdots & 0 & 0 & B_{N-2}' & A_{N-1} & B_{N-1}\\
D_0' & D_1' & D_2' & \cdots & D_{N-2}' & B_{N-1}' & A_N
\end{array}\right]\label{CML}
\end{align}
\begin{align}\left[
\begin{array}{ccccccc}
A_0 & B_0 & D_2 & \cdots & D_{N-2} & D_{N-1} & D_{N}\\
B_0' & A_1 & B_1 & 0 & \cdots & 0 & 0\\
D_2' & B_1' & A_2 & B_2 & \cdots & 0 & 0\\
\vdots & \vdots & \vdots & \vdots & \vdots & \vdots & \vdots\\
D_{N-2}' & \cdots & 0 & B_{N-3}' & A_{N-2}  & B_{N-2} & 0\\
D_{N-1}' & \cdots & 0 & 0 & B_{N-2}' & A_{N-1} & B_{N-1}\\
D_{N}' & 0 & 0 & \cdots & 0 & B_{N-1}' & A_N
\end{array}\right]\label{CMF}
\end{align}

\end{definition}
$A_k$, $B_k$, and $D_k$ are matrices in general.  

To refer to both $CM_L$ and $CM_F$ matrices we call them $CM_c$. A $CM_c$ matrix for $c=N$ is $CM_L$ and for $c=0$ is $CM_F$.

\begin{theorem}\label{CML_Characterization} 
A NG sequence with covariance matrix $C$ is $CM_c$ iff $C^{-1}$ has the $CM_c$ form.

\end{theorem}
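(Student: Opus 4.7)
The plan is to reduce everything to the well-known fact that a NG sequence is Markov iff its inverse covariance matrix is block tridiagonal, together with the Schur-complement identity relating conditional covariances to blocks of $C^{-1}$. I will prove the $CM_L$ case in detail; the $CM_F$ case is analogous (conditioning on $x_0$ instead of $x_N$, so the block separated out sits in the top-left rather than the bottom-right).

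First I would fix the partition
\begin{align*}
C = \begin{pmatrix} P & Q \\ Q' & R \end{pmatrix}, \qquad C^{-1} = \begin{pmatrix} M & v \\ v' & a \end{pmatrix},
\end{align*}
where $R$ is the bottom-right block corresponding to $x_N$ and $P$ is the covariance of $[x_0,\ldots,x_{N-1}]$. The two preliminary ingredients I need are: (i) for the NG sequence, the conditional covariance of $[x_0,\ldots,x_{N-1}]$ given $x_N$ equals the Schur complement $\tilde{C} = P - Q R^{-1} Q'$; (ii) the block-inversion identity $M = \tilde{C}^{-1}$, i.e.\ the top-left $N\times N$ principal block of $C^{-1}$ is the inverse of the conditional covariance of $[x_0,\ldots,x_{N-1}]$ given $x_N$. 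Both are standard and I would just cite them.

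For the forward direction, assume $[x_k]$ is $CM_L$. Then by definition conditioning on $x_N$ makes $[x_0,\ldots,x_{N-1}]$ a Markov sequence, which is still NG (Gaussianity and nonsingularity are preserved under Gaussian conditioning). Applying Lemma~\ref{CDF_M} together with the classical characterization that a NG sequence is Markov iff its inverse covariance is block tridiagonal, $\tilde{C}^{-1}$ must be block tridiagonal. By ingredient (ii), this says $M$ is block tridiagonal, i.e.\ the top-left $N \times N$ principal block of $C^{-1}$ has the tridiagonal pattern with blocks $A_0,\ldots,A_{N-1}$ on the diagonal and $B_0,\ldots,B_{N-2}$ on the off-diagonals; the remaining last row and column of $C^{-1}$ are unrestricted and supply the blocks $D_0,\ldots,D_{N-2}, B_{N-1}, A_N$. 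This is exactly the $CM_L$ form \eqref{CML}.

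For the converse, assume $C^{-1}$ has the $CM_L$ form. Then by inspection $M$ (the top-left $N \times N$ principal block) is block tridiagonal, and by (ii) so is $\tilde{C}^{-1}$. The conditional distribution of $[x_0,\ldots,x_{N-1}]$ given $x_N$ is NG with covariance $\tilde{C}$, and since its inverse covariance is block tridiagonal, this conditional sequence is Markov. Hence $[x_k]$ is $CM_L$. The $CM_F$ case is identical after permuting the block rows/columns so that the $x_0$ block is separated to the corner.

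The main obstacle is really just the bookkeeping with the partitioned inverse; once the identity $M = \tilde{C}^{-1}$ is invoked, the equivalence between "$C^{-1}$ top-left block is block tridiagonal" and "the conditional NG subsequence is Markov" is immediate, so the entire theorem reduces to the standard tridiagonal-inverse characterization of Markov NG sequences.
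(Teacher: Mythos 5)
Your argument is correct, but note that this paper does not actually prove Theorem~\ref{CML_Characterization}: it is stated in the Preliminaries as a result reviewed from \cite{CM_Part_I}, so there is no in-paper proof to match against. Your route --- identify the conditional covariance of $[x_k]_0^{N-1}$ given $x_N$ with the Schur complement $P-QR^{-1}Q'$, invoke the block-inversion identity to equate its inverse with the top-left principal block $M$ of $C^{-1}$, and then apply the tridiagonal characterization of NG Markov sequences (Remark~\ref{Markov_Characterization}) --- is a clean, self-contained reduction, and every step checks out: the Schur complement of a positive definite matrix is positive definite, so the conditional sequence is again NG, and since conditional independence for jointly Gaussian variables is read off the precision matrix irrespective of the (here $x_N$-dependent) conditional mean, ``conditional sequence Markov'' is indeed equivalent to ``$M$ block tridiagonal,'' which is exactly the $CM_L$ pattern \eqref{CML}; the $CM_F$ case follows by the symmetric partition at $x_0$, giving \eqref{CMF}. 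The cited source establishes the same equivalence by factoring the joint Gaussian density as $p(x_N)\prod_k p(x_k\,|\,x_{k-1},x_N)$ and matching quadratic forms in the exponent (a derivation tied to the dynamic model \eqref{CML_Dynamic_Forward}); your linear-algebraic version buys a shorter proof that leans on two standard facts, while the density-factorization version additionally produces the model parameters. Either way, no gap.
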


\begin{proposition}\label{CML_k1k2_Characterization_1}
A NG $[x_k]$ with $A$ being the inverse of its covariance matrix is

(i) $[0,k_2]$-$CM_c$ ($k_2 \in [1,N-1]$) iff $\Delta_{A_{22}}$ has the $CM_c$ form, where
\begin{align}
\Delta _{A_{22}}&=A_{11}-A_{12}A_{22}^{-1}A_{12}'\label{DA_22}
\end{align}
$A_{11}=A_{[1:k_2+1,1:k_2+1]}$, $A_{22}=A_{[k_2+2:N+1,k_2+2:N+1]}$, and $A_{12}=A_{[1:k_2+1,k_2+2:N+1]}$.

(ii) $[k_1,N]$-$CM_c$ ($k_1 \in [1,N-1]$) iff $\Delta_{A_{11}}$ has the $CM_c$ form, where
\begin{align}
\Delta _{A_{11}}&=A_{22}-A_{12}'A_{11}^{-1}A_{12}\label{DA_11}
\end{align}
$A_{11}=A_{[1:k_1,1:k_1]}$, $A_{22}=A_{[k_1+1:N+1,k_1+1:N+1]}$, and $A_{12}=$ $A_{[1:k_1,k_1+1:N+1]}$. 

\end{proposition}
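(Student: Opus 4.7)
The plan is to reduce each part to Theorem \ref{CML_Characterization}, applied not to $[x_k]$ itself but to an interval sub-sequence. For part (i), Lemma \ref{CMc_CDF} shows that the $[0,k_2]$-$CM_c$ property is an assertion about conditional CDFs of variables $x_k$ whose indices all lie in $[0,k_2]$, so it depends only on the joint law of the restricted sub-sequence $[x_k]_0^{k_2}$. Viewing $[x_k]_0^{k_2}$ as a NG sequence on its own index interval $[0,k_2]$, its covariance matrix is the top-left $(k_2+1)\times(k_2+1)$ principal block $C_{11}$ of $C$. Consequently $[x_k]$ is $[0,k_2]$-$CM_c$ iff $[x_k]_0^{k_2}$ is $[0,k_2]$-$CM_c$, which by Theorem \ref{CML_Characterization} applied to this sub-sequence holds iff $C_{11}^{-1}$ has the $CM_c$ form of Definition \ref{CMc_Matrix}.

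It then remains to identify $C_{11}^{-1}$ with $\Delta_{A_{22}}$. Partitioning $A = C^{-1}$ conformably into the blocks $A_{11}, A_{12}, A_{22}$ specified in the statement, the standard Schur-complement identity for the inverse of a block matrix gives
\begin{align*}
C_{11}^{-1} = A_{11} - A_{12} A_{22}^{-1} A_{12}' = \Delta_{A_{22}},
\end{align*}
which combined with the previous paragraph proves part (i).

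Part (ii) is entirely symmetric: restrict to $[x_k]_{k_1}^{N}$, whose covariance is the bottom-right block $C_{22}$ of the corresponding $2\times 2$ partition of $C$; the dual Schur-complement identity gives $C_{22}^{-1} = A_{22} - A_{12}' A_{11}^{-1} A_{12} = \Delta_{A_{11}}$, and Theorem \ref{CML_Characterization} applied to the NG sub-sequence $[x_k]_{k_1}^{N}$ concludes. There is no genuinely hard step; the only point worth checking carefully is the opening reduction, namely that the restriction $[x_k]_{k_1}^{k_2}$ is $CM_c$ on its own interval iff the full sequence $[x_k]$ is $[k_1,k_2]$-$CM_c$. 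Once that is in hand, the rest is just bookkeeping around one Schur-complement identity.
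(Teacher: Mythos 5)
Your proof is correct. The paper states this proposition without its own proof (it is quoted from \cite{CM_Part_I}), and your argument --- observing via Lemma \ref{CMc_CDF} that the $[0,k_2]$-$CM_c$ (resp.\ $[k_1,N]$-$CM_c$) property depends only on the marginal law of the restricted sub-sequence, applying Theorem \ref{CML_Characterization} to that NG sub-sequence whose covariance is the corresponding principal block of $C$, and identifying the inverse of that block with the Schur complement $\Delta_{A_{22}}$ (resp.\ $\Delta_{A_{11}}$) of $A=C^{-1}$ --- is exactly the standard route one would take, with all the needed side conditions (positive definiteness of the principal blocks of $C$ and of $A$) holding automatically.
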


A positive definite matrix $A$ is called a $[0,k_2]$-$CM_c$ ($[k_1,N]$-$CM_c$) matrix if $\Delta_{A_{22}}$ ($\Delta_{A_{11}}$) in $\eqref{DA_22}$ ($\eqref{DA_11}$) has the $CM_c$ form.

Theorem \ref{CML_Dynamic_Forward_Proposition} and Proposition \ref{CMF_Dynamic_Backward_Proposition} present forward and backward $CM_c$ dynamic models.

\begin{theorem}\label{CML_Dynamic_Forward_Proposition}
A ZMNG $[x_k]$ with covariance function $C_{l_1,l_2}$ is $CM_c$ iff it is governed by
\begin{align}
x_k=G_{k,k-1}x_{k-1}+G_{k,c}x_c+e_k, \quad k \in (0,N] \setminus \lbrace c \rbrace
\label{CML_Dynamic_Forward}
\end{align}
where $[e_k]$ is a zero-mean white Gaussian sequence with nonsingular covariances $G_k$, along with boundary condition\footnote{$e_0$ and $e_N$ in $\eqref{CML_Forward_BC1}$ are not necessarily the same as $e_0$ and $e_N$ in $\eqref{CML_Forward_BC2}$. Just for simplicity we use the same notation.}
\begin{align}
&x_0=e_0, \quad x_c=G_{c,0}x_0+e_c \, \, (\text{for} \,\, c=N)\label{CML_Forward_BC1}
\end{align}
or equivalently
\begin{align}
&x_c=e_c, \quad x_0=G_{0,c}x_c+e_0 \, \, (\text{for} \, \, c=N) \label{CML_Forward_BC2}
\end{align}

\end{theorem}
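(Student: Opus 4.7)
The plan is to prove the two directions of the equivalence separately, focusing on $c=N$ ($CM_L$); the case $c=0$ ($CM_F$) follows by a symmetric, time-reversed argument. For the \emph{necessity} direction, I would start from Lemma~\ref{CMc_CDF}: specialized to $c=N$, it gives $E[x_k\mid [x_i]_0^{k-1},x_N]=E[x_k\mid x_{k-1},x_N]$ for every $k\in(0,N)$ in the zero-mean Gaussian setting. Since $[x_k]$ is nonsingular Gaussian, the joint covariance of $(x_{k-1},x_N)$ is invertible, so the conditional mean is uniquely linear: $G_{k,k-1}x_{k-1}+G_{k,N}x_N$. Define $e_k\triangleq x_k-G_{k,k-1}x_{k-1}-G_{k,N}x_N$ for $k\in(0,N)$, together with $e_0\triangleq x_0$ and $e_N\triangleq x_N-G_{N,0}x_0$ where $G_{N,0}$ is the LMMSE coefficient. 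Each interior $e_k$ is orthogonal (and, by joint Gaussianity, independent) of $([x_i]_0^{k-1},x_N)$, and $e_N$ is orthogonal to $x_0$. Whiteness of $[e_k]_0^N$ then follows by noting that for $0\leq j<k<N$, $e_j\in\mathrm{span}([x_i]_0^{j},x_N)\subseteq\mathrm{span}([x_i]_0^{k-1},x_N)$, and $e_N\in\mathrm{span}(x_0,x_N)\subseteq\mathrm{span}([x_i]_0^{k-1},x_N)$. Nonsingularity of each $\mathrm{Cov}(e_k)$ must be deduced from the nonsingularity of $C$: singularity would force a nontrivial linear dependence among the components of $(x_k,x_{k-1},x_N)$, contradicting invertibility of the corresponding submatrix of $C$.

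For the \emph{sufficiency} direction, assume the model \eqref{CML_Dynamic_Forward} together with one of the boundary forms. Iteratively solving the recursion writes each $x_k$ as a linear combination of $\{e_0,\ldots,e_k,e_N\}$, so $[x_k]$ is zero-mean jointly Gaussian, and nonsingular thanks to the nonsingularity of the innovation covariances $G_k$. For $k\in(0,N)$, $e_k$ is independent of $\{e_0,\ldots,e_{k-1},e_N\}$ and hence of $([x_i]_0^{k-1},x_N)$; therefore $E[x_k\mid [x_i]_0^{k-1},x_N]=G_{k,k-1}x_{k-1}+G_{k,N}x_N=E[x_k\mid x_{k-1},x_N]$, which yields the $CM_L$ condition via Lemma~\ref{CMc_CDF}. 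The equivalence of \eqref{CML_Forward_BC1} and \eqref{CML_Forward_BC2} follows from Gaussian projection symmetry on the nonsingular pair $(x_0,x_N)$: one may predict $x_N$ from $x_0$ or $x_0$ from $x_N$, each yielding an equivalent parameterization with appropriately relabeled noise (as flagged in the theorem's footnote).

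The most delicate step is expected to be the nonsingularity of the innovation covariances in the necessity direction, since the theorem explicitly requires $[e_k]$ to have nonsingular covariance at every index. Reducing this to the nonsingularity of the overall covariance $C$ calls for careful rank arguments on specific block submatrices---those associated with $(x_k,x_{k-1},x_N)$ for interior $k$, and $(x_0,x_N)$ for the boundary innovation $e_N$. Once this is in place, the remaining steps are essentially direct applications of linear Gaussian conditioning, orthogonality, and the equivalence between Markovianity and white-noise representations.
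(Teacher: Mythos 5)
The paper does not actually prove this theorem: it is quoted in Section~\ref{Preliminaries} as a known result with the proof deferred to \cite{CM_Part_I}, so there is no in-paper argument to compare against. On its own merits your proposal is essentially the standard and correct innovations argument, and it matches the route taken in the cited reference for necessity: set $e_k=x_k-E[x_k\mid [x_i]_0^{k-1},x_N]$, use Lemma~\ref{CMc_CDF} with $j=k-1$ to collapse the conditioning to $(x_{k-1},x_N)$, and get whiteness from the inclusion $\mathrm{span}(e_0,\dots,e_j,e_N)\subseteq\mathrm{span}([x_i]_0^{k-1},x_N)$. Two remarks. First, the step you single out as delicate is actually routine: $\mathrm{Cov}(e_k)$ is the conditional covariance of $x_k$ given $([x_i]_0^{k-1},x_N)$, i.e.\ a Schur complement of a principal submatrix of the positive definite $C$, hence automatically positive definite; no separate rank argument is needed. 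Second, and this is the only point I would insist you repair: the $c=0$ case does \emph{not} follow by time reversal of the $c=N$ case. Time-reversing the forward $CM_L$ model \eqref{CML_Dynamic_Forward} produces the \emph{backward} $CM_F$ model of Proposition~\ref{CMF_Dynamic_Backward_Proposition}, not the forward model \eqref{CML_Dynamic_Forward} with $c=0$ that the theorem asserts. The forward $c=0$ case instead needs the same direct innovations argument run again, which is in fact easier: since $x_0\in\mathrm{span}([x_i]_0^{k-1})$, the $CM_F$ property gives $E[x_k\mid [x_i]_0^{k-1}]=E[x_k\mid x_{k-1},x_0]$ directly, and whiteness follows from orthogonality to the past alone. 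Finally, in the sufficiency direction you verify \eqref{CDF_1} only for $j=k-1$; you should note explicitly that the one-step version implies the general $j<k$ version by the usual Markov iteration (and that equality of conditional covariances, not just means, follows because the conditional law of $x_k$ given the larger $\sigma$-algebra is $N(G_{k,k-1}x_{k-1}+G_{k,N}x_N,\,G_k)$, which is measurable with respect to the smaller one). With those repairs the proof is complete; it is also worth knowing that an alternative sufficiency argument, closer in spirit to Corollary~\ref{CML_R_Dynamic_Forward_Corollary}, computes $C^{-1}=\mathcal{G}'G^{-1}\mathcal{G}$ from the model and checks the $CM_c$ form of Theorem~\ref{CML_Characterization} directly.
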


\begin{proposition}\label{CMF_Dynamic_Backward_Proposition}
A ZMNG $[x_k]$ with covariance function $C_{l_1,l_2}$ is $CM_c$ iff its evolution is governed by 
\begin{align}
x_k=G^B_{k,k+1}x_{k+1}+G^B_{k,c}x_c+e^{B}_k, \quad k \in [0,N) \setminus \lbrace c\rbrace
\label{CML_Dynamic_Backward}
\end{align}
where $[e^{B}_k]$ is a zero-mean white Gaussian sequence with nonsingular covariances $G^B_k$, along with the boundary condition
\begin{align}
x_N=e^{B}_N, \quad x_{c}=G^{B}_{c,N}x_N+e^{B}_{c} \, \, (\text{for} \, \, c=0) \label{CML_Backward_BC1}
\end{align}
or equivalently
\begin{align}
x_c=e^{B}_c, \quad x_{N}=G^{B}_{N,c}x_c+e^{B}_{N} \, \, (\text{for} \, \, c=0) \label{CML_Backward_BC2}
\end{align}

\end{proposition}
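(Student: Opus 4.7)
The plan is to mirror the derivation of the forward model in Theorem \ref{CML_Dynamic_Forward_Proposition}, but working from the backward equivalent form of the CM definition, namely $\eqref{CDF_2}$ of Lemma \ref{CMc_CDF}. For a ZMNG $CM_c$ sequence on $[0,N]$, the identity $F(\xi_k\mid[x_i]_j^{N},x_c)=F(\xi_k\mid x_j,x_c)$ (for $k<j$) is equivalent in the Gaussian case to $E[x_k\mid x_{k+1},\ldots,x_N,x_c]=E[x_k\mid x_{k+1},x_c]$, and the entire proof is organized around this identity.

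For the only-if direction I would take $j=k+1$ above. Since the right-hand side is a linear function of $(x_{k+1},x_c)$, write it as $G^{B}_{k,k+1}x_{k+1}+G^{B}_{k,c}x_c$ and set $e^{B}_k:=x_k-G^{B}_{k,k+1}x_{k+1}-G^{B}_{k,c}x_c$; this immediately yields the recursion $\eqref{CML_Dynamic_Backward}$ for every $k\in[0,N)\setminus\{c\}$. The boundary values are read off directly: $x_N=e^{B}_N$ is definitional, and for $c=0$ the link between $x_0$ and $x_N$ comes from projecting inside the bivariate Gaussian $(x_0,x_N)$, giving $\eqref{CML_Backward_BC1}$ when $x_N$ is the anchor and $\eqref{CML_Backward_BC2}$ when $x_0$ is; the two forms are equivalent by standard Gaussian projection duality. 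Nonsingularity of the covariance $G^{B}_k$ of $e^{B}_k$ follows from nonsingularity of the joint covariance of $(x_k,x_{k+1},x_c)$.

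The key technical step is whiteness of $[e^{B}_k]$. By construction $e^{B}_k$ is orthogonal to $(x_{k+1},x_c)$, and the CM identity strengthens this to orthogonality of $e^{B}_k$ with every $x_j$, $j>k$. For any such $j$, $e^{B}_j$ is itself an affine combination of $x_j,x_{j+1},x_c$, so $\operatorname{Cov}(e^{B}_k,e^{B}_j)=0$; joint Gaussianity then upgrades uncorrelatedness to independence, making $[e^{B}_k]$ a zero-mean white Gaussian sequence.

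For the if direction, I would assume $\eqref{CML_Dynamic_Backward}$ together with the appropriate boundary and iterate backward (through increasing index) from $k$ up to some $j>k$. This expresses $x_k$ as a linear combination of $x_j$, $x_c$, and $e^{B}_k,\ldots,e^{B}_{j-1}$; whiteness of $[e^{B}_k]$ makes these noise terms independent of $(x_j,x_{j+1},\ldots,x_N)$, so $E[x_k\mid x_j,\ldots,x_N,x_c]=E[x_k\mid x_j,x_c]$, which by Gaussianity is equivalent to $\eqref{CDF_2}$ and therefore gives $CM_c$ via Lemma \ref{CMc_CDF}. The main obstacle I expect is the bookkeeping at the boundary when $c=0$: verifying that the backward iteration meshes cleanly with $x_N=e^{B}_N$, and that the two equivalent boundary forms $\eqref{CML_Backward_BC1}$ and $\eqref{CML_Backward_BC2}$ generate the same sequence. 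This reduces to the same Gaussian projection identity used to derive the boundary in the first place, so it is more tedious than conceptually difficult.
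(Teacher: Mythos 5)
Your proposal is correct and follows essentially the same route as the source: the paper states this proposition without proof (deferring to \cite{CM_Part_I}), and the argument there is exactly this projection/innovations construction --- define $e^{B}_k$ as the residual of $E[x_k\mid x_{k+1},x_c]$, use the $CM_c$ identity $\eqref{CDF_2}$ with $j=k+1$ to get orthogonality to the whole span of $([x_i]_{k+1}^{N},x_c)$ and hence whiteness, and iterate the recursion for the converse. The only point worth making explicit in a full write-up is that the boundary residual $e^{B}_0=x_0-E[x_0\mid x_N]$ (for $c=0$) is also uncorrelated with each interior $e^{B}_k$, which follows since $e^{B}_k\perp\mathrm{span}(x_{k+1},\ldots,x_N,x_0)$; your sketch implicitly covers this.
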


\section{Reciprocal Sequence Characterization}\label{Section_Reciprocal_Characterziation}

The relationship between the CM sequence and the reciprocal sequence is presented in Theorem \ref{CM_iff_Reciprocal} for the general (Gaussian/non-Gaussian) case. Proofs can be found in \cite{CM_Part_II_A}.

\begin{theorem}\label{CM_iff_Reciprocal}
$[x_k]$ is reciprocal iff it is

(i) $[k_1,N]$-$CM_F$, $\forall k_1 \in [0,N]$, and $CM_L$

or equivalently

(ii) $[0,k_2]$-$CM_L$, $\forall k_2 \in [0,N]$, and $CM_F$

\end{theorem}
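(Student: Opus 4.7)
The plan is to prove the equivalence with characterization (i); (ii) then follows by time reversal, which swaps $CM_F$ with $CM_L$ and $[k_1,N]$ with $[0,k_2]$ while preserving reciprocity. I will work throughout with the conditional-CDF reformulations of Lemmas \ref{CDF} and \ref{CMc_CDF}, so that every reciprocal or CM property becomes a statement about conditional independence. The key technical tool is the graphoid calculus: decomposition ($A \perp (B,C) \mid D \Rightarrow A \perp B \mid D$), weak union ($A \perp (B,C) \mid D \Rightarrow A \perp B \mid C, D$), and the symmetry of conditional independence.

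For the $(\Rightarrow)$ direction, suppose $[x_k]$ is reciprocal. $CM_L$ is immediate from Lemma \ref{CDF} by taking $l = N$, since $[x_i]_N^N = \{x_N\}$. To obtain $[k_1,N]$-$CM_F$ for an arbitrary $k_1$, fix $k_1 \le j < k \le N$ and apply the reciprocal property to the interval $[k_1, j]$: conditioned on $x_{k_1}$ and $x_j$, the inside block $(x_{k_1+1},\ldots,x_{j-1})$ is independent of everything outside $[k_1,j]$, so by decomposition and symmetry $x_k \perp (x_{k_1+1},\ldots,x_{j-1}) \mid x_{k_1}, x_j$, which is exactly the $[k_1,N]$-$CM_F$ identity of Lemma \ref{CMc_CDF}.

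For the $(\Leftarrow)$ direction, assume (i) and fix $j < k < l$; by Lemma \ref{CDF} the goal is $F(\xi_k \mid [x_i]_0^j, [x_i]_l^N) = F(\xi_k \mid x_j, x_l)$. Step 1 applies $CM_L$, i.e.\ Markovness of $[x_i]_0^{N-1}$ given $x_N$, at the two splitting indices $j$ and $l$: from $(x_0,\ldots,x_{j-1}) \perp (x_{j+1},\ldots,x_{N-1}) \mid x_j, x_N$ and the analogous statement at $l$, two successive applications of weak union collapse the left-hand side to $F(\xi_k \mid x_j, x_l, x_N)$. Step 2 removes the extra $x_N$ via $[j,N]$-$CM_F$: conditioned on $x_j$ the sequence on $[j,N]$ is Markov, so $(x_{j+1},\ldots,x_{l-1})$ and $(x_{l+1},\ldots,x_N)$ are independent given $x_j, x_l$; in particular $x_k \perp x_N \mid x_j, x_l$, giving $F(\xi_k \mid x_j, x_l, x_N) = F(\xi_k \mid x_j, x_l)$. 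The boundary cases $l = N$ and $j = 0$ are handled by truncating the appropriate step; note that the case $j = 0$ relies on $CM_F = [0,N]$-$CM_F$, which is included in the assumed family $\{[k_1,N]\text{-}CM_F\}_{k_1}$.

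The main obstacle will not be any single deep computation but the bookkeeping of conditional independences: one must apply decomposition and weak union at precisely the right moments to grow the conditioning set from the bare $\{x_{k_1}, x_{k_2}\}$ of the reciprocal and CM definitions to the full past-and-future blocks appearing in the target identity, while keeping careful track of which variables lie inside versus outside each reciprocal or CM interval. This accounting also explains why the entire family $\{[k_1,N]\text{-}CM_F\}_{k_1}$, rather than any single instance, is required in combination with $CM_L$.
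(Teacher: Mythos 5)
Your argument is correct, but there is nothing in this paper to compare it against: the paper states Theorem \ref{CM_iff_Reciprocal} without proof and defers to \cite{CM_Part_II_A}, so your graphoid-calculus derivation is a self-contained proof where the paper offers none. Checking the steps: the forward direction is right --- $CM_L$ is the $l=N$ instance of Lemma \ref{CDF}, and $[k_1,N]$-$CM_F$ follows by applying the block form of reciprocity to $[k_1,j]$ and using symmetry plus decomposition to isolate $x_k$ (which lies outside $[k_1,j]$) against the inside block $(x_{k_1+1},\ldots,x_{j-1})$. The backward direction also goes through: at split $j$, $CM_L$ gives $(x_0,\ldots,x_{j-1})\perp(x_{j+1},\ldots,x_{N-1})\mid x_j,x_N$, and decomposition followed by weak union strips $[x_i]_0^{j-1}$ from the conditioning set; the same at split $l$ strips $[x_i]_{l+1}^{N-1}$; finally $[j,N]$-$CM_F$ at split $l$ gives $x_k\perp x_N\mid x_j,x_l$ and removes the residual $x_N$. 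Only symmetry, decomposition, weak union, and (implicitly, in passing from the single-variable CDF identities of Lemmas \ref{CMc_CDF} and \ref{CDF_M} to block conditional independence) contraction are used --- all semi-graphoid axioms valid for arbitrary distributions, so the argument legitimately covers the general Gaussian/non-Gaussian case as the theorem requires, and no positivity or nonsingularity assumption is smuggled in. Two small points worth making explicit in a written-up version: (a) the single-variable-to-block upgrade of the Markov property deserves a sentence, since the lemmas only state the single-variable form; (b) the time-reversal reduction of (ii) to (i) should note that reversal maps $[k_1,N]$-$CM_F$ to $[0,N-k_1]$-$CM_L$ and preserves reciprocity, which is immediate from the symmetry of Lemma \ref{CDF} in $j$ and $l$.
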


In \cite{ABRAHAM} a part of the conditions of Theorem \ref{CM_iff_Reciprocal} (i.e., $[k_1,N]$-$CM_F$, $\forall k_1 \in [0,N]$) was mentioned, but the other part (i.e., $CM_L$) was overlooked. The condition presented in \cite{ABRAHAM} is not sufficient for a Gaussian process to be reciprocal \cite{CM_Part_II_A}.

From the proof of Theorem \ref{CM_iff_Reciprocal} (see \cite{CM_Part_II_A}), a sequence that is $[k_1,N]$-$CM_F$ ($\forall k_1 \in [0,N]$) and $CM_L$ or  equivalently is $[0,k_2]$-$CM_L$ ($\forall k_2 \in [0,N]$) and $CM_F$ is actually $[k_1,N]$-$CM_F$ and $[0,k_2]$-$CM_L$ ($\forall k_1, k_2 \in [0,N]$). It means that a (Gaussian/non-Gaussian) sequence is reciprocal iff it is $[k_1,N]$-$CM_F$ and $[0,k_2]$-$CM_L$ ($\forall k_1, k_2 \in [0,N]$). This was pointed out for the Gaussian case in \cite{Carm}. However, \cite{Carm} did not discuss if the condition presented in \cite{ABRAHAM} is sufficient even for the Gaussian case.

A characterization of the NG reciprocal sequence was presented in \cite{Levy_Dynamic}. Below that characterization is obtained from the CM viewpoint, which is simple and different from the proof presented in \cite{Levy_Dynamic}.

\begin{theorem}\label{Reciprocal_Characterization} 
A NG sequence with covariance matrix $C$ is reciprocal iff $C^{-1}$ is cyclic tridiagonal as $\eqref{Cyclic_Tridiagonal}$.
\begin{align}\left[
\begin{array}{ccccccc}
A_0 & B_0 & 0 & \cdots & 0 & 0 & D_0\\
B_0' & A_1 & B_1 & 0 & \cdots & 0 & 0\\
0 & B_1' & A_2 & B_2 & \cdots & 0 & 0\\
\vdots & \vdots & \vdots & \vdots & \vdots & \vdots & \vdots\\
0 & \cdots & 0 & B_{N-3}' & A_{N-2}  & B_{N-2} & 0\\
0 & \cdots & 0 & 0 & B_{N-2}' & A_{N-1} & B_{N-1}\\
D_0' & 0 & 0 & \cdots & 0 & B_{N-1}' & A_N
\end{array}\right]\label{Cyclic_Tridiagonal}
\end{align}

\end{theorem}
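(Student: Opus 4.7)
My plan is to derive Theorem \ref{Reciprocal_Characterization} entirely from the CM-sequence machinery of Section \ref{Section_Definitions_Preliminaries}, in contrast to the direct Gaussian-density computation of \cite{Levy_Dynamic}. The key observation is that the cyclic tridiagonal pattern \eqref{Cyclic_Tridiagonal} is essentially the intersection of the $CM_L$ and $CM_F$ matrix patterns, once Theorem \ref{CM_iff_Reciprocal} has exposed reciprocity as a conjunction of CM conditions.

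For necessity, if $[x_k]$ is reciprocal then Theorem \ref{CM_iff_Reciprocal}(i) (taking $k_1=0$) yields both $CM_F$ and $CM_L$, so Theorem \ref{CML_Characterization} forces $A := C^{-1}$ to have simultaneously the $CM_L$ form \eqref{CML} and the $CM_F$ form \eqref{CMF}. Intersecting these patterns, the last-column blocks $D_1,\ldots,D_{N-2}$ appearing in \eqref{CML} lie outside the tridiagonal strip prescribed by \eqref{CMF} for rows $1,\ldots,N-1$, so they must vanish; symmetrically, the first-row blocks $D_2,\ldots,D_{N-1}$ of \eqref{CMF} must vanish. What survives is a block-tridiagonal matrix plus the two corner blocks $D_0$ at position $(0,N)$ and $D_0'$ at position $(N,0)$, which is precisely \eqref{Cyclic_Tridiagonal}.

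For sufficiency, a cyclic tridiagonal $A = C^{-1}$ is already a special case of the $CM_L$ form \eqref{CML} (with $D_1 = \cdots = D_{N-2} = 0$), so Theorem \ref{CML_Characterization} gives $CM_L$. By Theorem \ref{CM_iff_Reciprocal}(i), it remains to verify $[k_1,N]$-$CM_F$ for every $k_1 \in [1,N-1]$, which by Proposition \ref{CML_k1k2_Characterization_1}(ii) amounts to checking that $\Delta_{A_{11}} = A_{22} - A_{12}' A_{11}^{-1} A_{12}$ is of $CM_F$ form. Because $A$ is cyclic tridiagonal, $A_{11}$ and $A_{22}$ are block tridiagonal, and $A_{12}$ has at most two nonzero blocks, namely $B_{k_1-1}$ adjacent to the tridiagonal strip and $D_0$ in the opposite corner. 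Consequently $A_{12}' A_{11}^{-1} A_{12}$ has nonzeros only at the four corner positions of $A_{22}$, so $\Delta_{A_{11}}$ is itself cyclic tridiagonal and in particular of $CM_F$ form. Theorem \ref{CM_iff_Reciprocal} then concludes that $[x_k]$ is reciprocal.

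The main obstacle is the Schur-complement step in the sufficiency direction: one must track block indices carefully for arbitrary $k_1$ and exploit the very sparse, two-corner structure of $A_{12}$ to see that $\Delta_{A_{11}}$ inherits a cyclic tridiagonal pattern rather than acquiring extra fill-in. Once this is handled, both directions follow immediately from the CM framework already in place, with no direct manipulation of Gaussian densities required.
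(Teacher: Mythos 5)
Your proof is correct and follows essentially the same route as the paper: both directions reduce to Theorem \ref{CM_iff_Reciprocal} combined with the matrix characterizations of Theorem \ref{CML_Characterization} and Proposition \ref{CML_k1k2_Characterization_1}, the only cosmetic difference being that you invoke condition (i) of Theorem \ref{CM_iff_Reciprocal} where the paper uses the symmetric condition (ii). Your explicit Schur-complement verification that $\Delta_{A_{11}}$ remains cyclic tridiagonal (hence of $CM_F$ form) supplies the detail behind a step the paper merely asserts.
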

\begin{proof}
Necessity: By Theorem \ref{CM_iff_Reciprocal}, characterization of the NG reciprocal sequence is the same as that of the NG sequence being $[0,k_2]$-$CM_L$, $\forall k_2 \in [0,N]$ and $CM_F$. Let $[x_k]$ be a NG sequence (with the covariance matrix $C$), which is $[0,k_2]$-$CM_L$, $ \forall k_2 \in [0,N]$ and $CM_F$. By Theorem \ref{CML_Characterization}, $C^{-1}$ is cyclic (block) tri-diagonal, because a matrix being both $CM_L$ and $CM_F$ is cyclic tri-diagonal. 

Sufficiency: Assume that the inverse of the covariance matrix ($C^{-1}$) of a NG $[x_k]$ is cyclic (block) tri-diagonal. A cyclic tri-diagonal matrix has the $CM_F$ and the $[0,k_2]$-$CM_L$ forms $\forall k_2 \in [0,N]$. Therefore, by Theorem \ref{CML_Characterization} and Proposition \ref{CML_k1k2_Characterization_1}, $[x_k]$ is $CM_F$ and $[0,k_2]$-$CM_L$, $\forall k_2 \in [0,N]$. Thus, by Theorem \ref{CM_iff_Reciprocal}, $[x_k]$ is reciprocal.     
\end{proof}

The following corollary of Theorem \ref{Reciprocal_Characterization} has an important implication about the relationship between the NG CM sequence and the NG reciprocal sequence. In addition, Corollary \ref{CMLCMF_Reciprocal} demonstrates the significance of studying the reciprocal sequence from the CM viewpoint.

\begin{corollary}\label{CMLCMF_Reciprocal}
A NG sequence is reciprocal iff it is both $CM_L$ and $CM_F$.
\end{corollary}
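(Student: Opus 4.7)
My plan is to derive the corollary as an essentially immediate consequence of Theorem \ref{Reciprocal_Characterization} (just proved) together with Theorem \ref{CML_Characterization}, by observing that the cyclic tridiagonal form in \eqref{Cyclic_Tridiagonal} is precisely the intersection of the $CM_L$ form \eqref{CML} and the $CM_F$ form \eqref{CMF}.

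For necessity, I would start from a NG reciprocal sequence and apply Theorem \ref{Reciprocal_Characterization} to conclude that $C^{-1}$ is cyclic tridiagonal. Since a cyclic tridiagonal matrix fits the $CM_L$ template \eqref{CML} (choose $D_0$ as the corner block and $D_1 = \cdots = D_{N-2} = 0$) and also fits the $CM_F$ template \eqref{CMF} (choose $D_N = D_0'$ as the corner block and $D_2 = \cdots = D_{N-1} = 0$), Theorem \ref{CML_Characterization} yields that $[x_k]$ is both $CM_L$ and $CM_F$.

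For sufficiency, I would assume $[x_k]$ is simultaneously $CM_L$ and $CM_F$, so that by Theorem \ref{CML_Characterization} the matrix $C^{-1}$ simultaneously has both forms \eqref{CML} and \eqref{CMF}. The main (and only non-trivial) step is the structural observation that any symmetric positive definite matrix lying in the intersection of these two templates must be cyclic tridiagonal: the $CM_L$ form forces all entries off the tridiagonal to live in the last block row/column, while the $CM_F$ form forces them to live in the first block row/column; the intersection of these two constraints leaves only the tridiagonal entries together with the $(0,N)$ and $(N,0)$ corner blocks, which is exactly the shape \eqref{Cyclic_Tridiagonal}. Then Theorem \ref{Reciprocal_Characterization} concludes reciprocity.

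I do not anticipate a genuine obstacle, since the structural matching of the three templates is purely combinatorial. The one point that deserves a brief sentence is making the intersection argument precise for an arbitrary off-diagonal block, to justify that no entries can survive outside the first row/column, the last row/column, and the main block tridiagonal simultaneously. With that remark in place, the proof reduces to two one-line applications of the already established characterizations.
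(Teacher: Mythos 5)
Your proposal is correct and follows essentially the same route as the paper, which presents this statement as an immediate corollary of Theorem \ref{Reciprocal_Characterization} via Theorem \ref{CML_Characterization} and the purely structural fact that the cyclic tridiagonal form \eqref{Cyclic_Tridiagonal} is exactly the intersection of the $CM_L$ template \eqref{CML} and the $CM_F$ template \eqref{CMF} (the paper even states this matching explicitly in its subsequent list of special cases, where setting $D_1=\cdots=D_{N-2}=0$ in \eqref{CML} recovers the reciprocal characterization). The only nit is a harmless indexing slip: matching the corner of \eqref{Cyclic_Tridiagonal} to the $CM_F$ template gives $D_N=D_0$ (both denote the $(0,N)$ block), not $D_N=D_0'$.
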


By Corollary \ref{CMLCMF_Reciprocal}, a NG sequence being both $CM_L$ and $CM_F$ is $[k_1,k_2]$-$CM_L$ and $[k_1,k_2]$-$CM_F$, $\forall k_1,k_2 \in [0,N]$.

Note that a characterization of the NG Markov sequence is as follows \cite{Ackner}.
\begin{remark}\label{Markov_Characterization}
A NG sequence with covariance matrix $C$ is Markov iff $C^{-1}$ is tri-diagonal as $\eqref{Tridiagonal}$.
\begin{align}\left[
\begin{array}{ccccccc}
A_0 & B_0 & 0 & \cdots & 0 & 0 & 0\\
B_0' & A_1 & B_1 & 0 & \cdots & 0 & 0\\
0 & B_1' & A_2 & B_2 & \cdots & 0 & 0\\
\vdots & \vdots & \vdots & \vdots & \vdots & \vdots & \vdots\\
0 & \cdots & 0 & B_{N-3}' & A_{N-2}  & B_{N-2} & 0\\
0 & \cdots & 0 & 0 & B_{N-2}' & A_{N-1} & B_{N-1}\\
0 & 0 & 0 & \cdots & 0 & B_{N-1}' & A_N
\end{array}\right]\label{Tridiagonal}
\end{align}

\end{remark}

By Theorem \ref{CML_Characterization} and Proposition \ref{CML_k1k2_Characterization_1} we have

\begin{itemize}

\item Special case: A NG sequence with covariance matrix $C$ is $CM_L \cap [N-3,N]$-$CM_F$ iff $C^{-1}$ is given by $\eqref{CML}$ with $D_{N-2}=0$.

\item Special case: A NG sequence with covariance matrix $C$ is $CM_L \cap [N-4,N]$-$CM_F$ iff $C^{-1}$ is given by $\eqref{CML}$ with $D_{N-3}=D_{N-2}=0$.

\item General case: A NG sequence with covariance matrix $C$ is $CM_L \cap [k_1,N]$-$CM_F$ iff $C^{-1}$ is given by $\eqref{CML}$ with
\begin{align*}
D_{k_1+1}=D_{k_1+2}=\cdots =D_{N-3}=D_{N-2}=0
\end{align*}

\item Important special case: A NG sequence with covariance matrix $C$ is $CM_L \cap CM_F$ iff $C^{-1}$ is given by $\eqref{CML}$ with 
\begin{align*}
D_1=D_2=\cdots =D_{N-2}=0
\end{align*}
which is actually the reciprocal characterization (Corollary \ref{CMLCMF_Reciprocal}).

\end{itemize}

It is thus seen how the characterizations (i.e., $C^{-1}$) gradually change from $CM_L$ to reciprocal.

\section{Reciprocal $CM_c$ Dynamic Models}\label{Section_Reciprocal_Dynamic}

The reciprocal sequence is a special $CM_c$ sequence. Thus, it can be modeled by a $CM_c$ model. Also, one can find conditions for a $CM_c$ model to govern a reciprocal sequence. In the following, some models are presented for the NG reciprocal sequence based on $CM_c$ models.

\begin{theorem}\label{CML_R_Dynamic_Forward_Proposition}
A ZMNG $[x_k]$ with covariance function $C_{l_1,l_2}$ is reciprocal iff it is governed by $\eqref{CML_Dynamic_Forward}$ along with $\eqref{CML_Forward_BC1}$ or $\eqref{CML_Forward_BC2}$, and 
\begin{align}
G_k^{-1}G_{k,c}=G_{k+1,k}'G_{k+1}^{-1}G_{k+1,c}
\label{CML_Condition_Reciprocal}
\end{align}
$\forall k \in (0,N-1)$ for $c=N$, or $\forall k \in (1,N)$ for $c=0$. Moreover, $[x_k]$ is Markov iff additionally we have, for $c=N$,
\begin{align}
G_N^{-1}G_{N,0}=G_{1,N}'G_{1}^{-1}G_{1,0}\label{CML_M_BC1}
\end{align}
for $\eqref{CML_Forward_BC1}$ or equivalently
\begin{align}
G_0^{-1}G_{0,N}=G_{1,0}'G_1^{-1}G_{1,N}\label{CML_M_BC2}
\end{align}
for $\eqref{CML_Forward_BC2}$; for $c=0$, we have 
\begin{align}
G_{N,0}=0\label{CMF_M}
\end{align}

\end{theorem}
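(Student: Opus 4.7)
My plan is to combine three earlier results: Theorem~\ref{CML_Dynamic_Forward_Proposition} characterizes $CM_c$ sequences by the dynamic model~\eqref{CML_Dynamic_Forward}--\eqref{CML_Forward_BC2}, Corollary~\ref{CMLCMF_Reciprocal} identifies reciprocal with $CM_L\cap CM_F$ in the NG case, and Theorem~\ref{Reciprocal_Characterization} says reciprocal is equivalent to $C^{-1}$ being cyclic tridiagonal. Once the sequence already obeys a $CM_c$ model, the question reduces to: which parameter condition collapses the $CM_c$ form of $C^{-1}$ (either \eqref{CML} or \eqref{CMF}) to the cyclic tridiagonal form \eqref{Cyclic_Tridiagonal}? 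All blocks are already in the right place except the ``middle'' last-column (or first-row) blocks $D_1,\dots,D_{N-2}$ (resp.\ $D_2,\dots,D_{N-1}$), so I need to force exactly these blocks to vanish.

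For $c=N$, I would start from the $CM_L$ factorization $p(x_0)\,p(x_N\mid x_0)\,\prod_{k=1}^{N-1}p(x_k\mid x_{k-1},x_N)$ and expand $-2\log p(X)=X'C^{-1}X$ as a quadratic form in the stacked vector $X$. For each $j\in(0,N-1)$, the cross terms that couple $x_j$ with $x_N$ arise only from the $k=j$ factor (contributing $-G_j^{-1}G_{j,N}$) and the $k=j+1$ factor (contributing $G_{j+1,j}'G_{j+1}^{-1}G_{j+1,N}$), giving
\[
(C^{-1})_{j,N}=-G_j^{-1}G_{j,N}+G_{j+1,j}'G_{j+1}^{-1}G_{j+1,N}.
\]
Setting this to zero for every such $j$ is exactly \eqref{CML_Condition_Reciprocal}. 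The $c=0$ direction is entirely symmetric: starting from the $CM_F$ factorization $p(x_0)\,\prod_{k=1}^{N}p(x_k\mid x_{k-1},x_0)$, the same two-term structure produces $(C^{-1})_{0,j}$ for $j\in(1,N)$, whose vanishing yields the $c=0$ version of \eqref{CML_Condition_Reciprocal} after transposition.

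For the Markov addendum, Remark~\ref{Markov_Characterization} says a reciprocal NG sequence is Markov iff the sole remaining corner block $D_0=(C^{-1})_{0,N}$ also vanishes, so I would compute $D_0$ under each boundary choice. Under \eqref{CML_Forward_BC1} the boundary factor $p(x_N\mid x_0)$ contributes $-G_{N,0}'G_N^{-1}$ and the $k=1$ step contributes $G_{1,0}'G_1^{-1}G_{1,N}$; equating them and taking the transpose gives~\eqref{CML_M_BC1}. Under \eqref{CML_Forward_BC2} the boundary factor is instead $p(x_0\mid x_N)$, whose analogous calculation produces~\eqref{CML_M_BC2}. For $c=0$ only the $k=N$ step couples $x_0$ and $x_N$, yielding $D_0=-G_{N,0}'G_N^{-1}$, which vanishes iff $G_{N,0}=0$ by nonsingularity of $G_N$.

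The main obstacle is the careful bookkeeping of the quadratic-form expansion rather than any deep argument: cross terms $x_i'Mx_j$ with $i\neq j$ contribute with a factor of two to $(C^{-1})_{ij}$ because of the symmetry of $C^{-1}$, and one must verify that each target block aggregates contributions from precisely the two indices $k\in\{j,j+1\}$ inside the product (plus possibly one boundary term). Handling transposes and index ranges correctly is what makes the final expressions match the exact form of \eqref{CML_Condition_Reciprocal}, \eqref{CML_M_BC1}, \eqref{CML_M_BC2}, and \eqref{CMF_M}.
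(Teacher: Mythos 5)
Your proposal is correct and follows essentially the route the paper itself takes (made explicit in Corollary \ref{CML_R_Dynamic_Forward_Corollary}): write $C^{-1}=\mathcal{G}'G^{-1}\mathcal{G}$ from the $CM_c$ model, invoke Theorem \ref{Reciprocal_Characterization} and Remark \ref{Markov_Characterization}, and read off which last-column (resp.\ first-row) blocks must vanish, which yields exactly \eqref{CML_Condition_Reciprocal}, \eqref{CML_M_BC1}--\eqref{CML_M_BC2}, and \eqref{CMF_M}. Your block computations, index ranges, and transposes all check out against \eqref{L_1}, \eqref{L_2}, and \eqref{F}.
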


 The Markov sequence is a subset of the reciprocal sequence, and the reciprocal sequence is a subset of the $CM_c$ sequence. A $CM_c$ model is a complete description (i.e., necessary and sufficient) for the $CM_c$ sequence (Theorem \ref{CML_Dynamic_Forward_Proposition}). Theorem \ref{CML_R_Dynamic_Forward_Proposition} shows under what conditions a $CM_c$ model is a complete description of the reciprocal sequence, and under what conditions a $CM_c$ model is a complete description of the Markov sequence. Thus, Theorem \ref{CML_R_Dynamic_Forward_Proposition} unifies $CM_c$, reciprocal, and Markov sequences.

Theorem \ref{CML_R_Dynamic_Forward_Proposition} can be presented in another way.

\begin{corollary}\label{CML_R_Dynamic_Forward_Corollary}
Model $\eqref{CML_Dynamic_Forward}$ along with $\eqref{CML_Forward_BC1}$ or $\eqref{CML_Forward_BC2}$ governs a ZMNG reciprocal sequence iff the matrix
\begin{align}
A=\mathcal{G}'G^{-1}\mathcal{G}\label{A_Forward}
\end{align}
is cyclic tridiagonal, where $G=\text{diag}(G_0,G_1,\ldots, G_N)$, for $c=N$ the matrix $\mathcal{G}$ is, for $\eqref{CML_Forward_BC1}$,
\begin{align}\label{L_1}
\left[ \begin{array}{cccccc}
I & 0 & 0 &  \cdots & 0 & 0\\
-G_{1,0} & I & 0 &  \cdots & 0 & -G_{1,N}\\
0 & -G_{2,1} & I & 0 & \cdots & -G_{2,N}\\
\vdots & \vdots & \vdots & \vdots & \vdots & \vdots \\
0 & 0 & \cdots & -G_{N-1,N-2} & I & -G_{N-1,N}\\
-G_{N,0} & 0 & 0 &  \cdots & 0 & I
\end{array}\right]
\end{align}
$\mathcal{G}$ is, for $\eqref{CML_Forward_BC2}$, 
\begin{align}\label{L_2}
\left[ \begin{array}{cccccc}
I & 0 & 0 &  \cdots & 0 & -G_{0,N}\\
-G_{1,0} & I & 0 &  \cdots & 0 & -G_{1,N}\\
0 & -G_{2,1} & I & 0 & \cdots & -G_{2,N}\\
\vdots & \vdots & \vdots & \vdots & \vdots & \vdots \\
0 & 0 & \cdots & -G_{N-1,N-2} & I & -G_{N-1,N}\\
0 & 0 & 0 &  \cdots & 0 & I
\end{array}\right]
\end{align}
and for $c=0$, $\mathcal{G}$ is
\begin{align}\label{F}
\left[ \begin{array}{cccccc}
I & 0 & 0 &  \cdots & 0 & 0\\
-2G_{1,0} & I & 0 &  \cdots & 0 & 0\\
-G_{2,0} & -G_{2,1} & I & 0 & \cdots & 0\\
\vdots & \vdots & \vdots & \vdots & \vdots & \vdots \\
-G_{N-1,0} & 0 & \cdots & -G_{N-1,N-2} & I & 0\\
-G_{N,0} & 0 & 0 &  \cdots & -G_{N,N-1} & I
\end{array}\right]
\end{align}
In addition, the sequence is Markov iff $A$ is tri-diagonal.

\end{corollary}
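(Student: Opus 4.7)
The plan is to rewrite the dynamic recursion of Theorem \ref{CML_Dynamic_Forward_Proposition} as a single block matrix equation $\mathcal{G} x = e$, where $x = [x_0', x_1', \ldots, x_N']'$ and $e = [e_0', e_1', \ldots, e_N']'$, and then read off the characterization from Theorem \ref{Reciprocal_Characterization} and Remark \ref{Markov_Characterization}.

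First, I would stack all the defining equations (the dynamic model $\eqref{CML_Dynamic_Forward}$ together with the relevant boundary equations) into one linear system. For $c = N$ with boundary condition $\eqref{CML_Forward_BC1}$, the relation $x_0 = e_0$ supplies the top row, each rearranged dynamic equation $-G_{k,k-1} x_{k-1} + x_k - G_{k,N} x_N = e_k$ supplies one middle row, and $-G_{N,0} x_0 + x_N = e_N$ supplies the bottom row; this reproduces exactly the $\mathcal{G}$ in $\eqref{L_1}$. The case $\eqref{CML_Forward_BC2}$ is analogous and produces $\eqref{L_2}$. For $c = 0$, one must observe that when $k = 1$ the two coefficients $G_{k,k-1}$ and $G_{k,c}$ both equal $G_{1,0}$, so their contributions combine into the entry $-2G_{1,0}$ in $\eqref{F}$; the remaining rows are routine.

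Next, since Theorem \ref{CML_Dynamic_Forward_Proposition} guarantees the model yields a well-defined ZMNG $CM_c$ sequence, $\mathcal{G}$ is nonsingular (which can also be confirmed directly by cofactor expansion along the rows containing a single $I$ block). Hence $x = \mathcal{G}^{-1} e$, and because $[e_k]$ is zero-mean white Gaussian with block-diagonal covariance $G = \mathrm{diag}(G_0, \ldots, G_N)$, the covariance matrix of $x$ satisfies
\[
C = \mathcal{G}^{-1} G \mathcal{G}'^{-1}, \qquad \text{so} \qquad C^{-1} = \mathcal{G}' G^{-1} \mathcal{G} = A.
\]
Applying Theorem \ref{Reciprocal_Characterization} to $C^{-1} = A$ then yields that the sequence is reciprocal iff $A$ is cyclic tridiagonal, and Remark \ref{Markov_Characterization} yields that it is Markov iff $A$ is tridiagonal.

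The main obstacle is the bookkeeping in the three matrix cases, particularly verifying that $\eqref{CML_Dynamic_Forward}$ together with the relevant boundary equations assembles into $\mathcal{G}$ with exactly the structures $\eqref{L_1}$--$\eqref{F}$; the coincidence at $k = 1$ for $c = 0$ is the only genuinely non-mechanical step. Once the matrix representation is secured, the corollary follows immediately from the identity $C^{-1} = \mathcal{G}' G^{-1} \mathcal{G}$ and the already-established characterizations.
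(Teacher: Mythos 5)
Your proposal is correct and follows the route the paper intends: stack the model and boundary equations into $\mathcal{G}x=e$, obtain $C^{-1}=\mathcal{G}'G^{-1}\mathcal{G}=A$ from the whiteness and nonsingularity of $[e_k]$, and invoke Theorem \ref{Reciprocal_Characterization} and Remark \ref{Markov_Characterization}. You also correctly handle the one non-mechanical detail, the merging of $G_{1,0}$ and $G_{1,c}$ into $-2G_{1,0}$ when $c=0$.
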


A $CM_c$ model governing a reciprocal (Markov) sequence is called a reciprocal (Markov) $CM_c$ model.

The following proposition presents backward models for the reciprocal sequence.

\begin{proposition}\label{CMF_R_Dynamic_Backward_Proposition}
A ZMNG sequence $[x_k]$ with covariance function $C_{l_1,l_2}$ is reciprocal iff it is governed by $\eqref{CML_Dynamic_Backward}$ along with $\eqref{CML_Backward_BC1}$ or $\eqref{CML_Backward_BC2}$ and
\begin{align}
(G^B_{k+1})^{-1}G^B_{k+1,c}=(G^B_{k,k+1})'(G^B_{k})^{-1}G^B_{k,c}
\label{CMF_Condition_Reciprocal_B}
\end{align}
$\forall k \in (0,N-1)$ for $c=0$, or $\forall k \in [0,N-2)$ for $c=N$. Moreover, $[x_k]$ is Markov iff additionally we have, for $c=0$,
\begin{align}
(G^B_0)^{-1}G^B_{0,N}=(G^B_{N-1,0})'(G^B_{N-1})^{-1}G^B_{N-1,N}\label{CML_B_M_BC1}
\end{align}
for $\eqref{CML_Backward_BC1}$, or equivalently
\begin{align}
(G^B_N)^{-1}G^B_{N,0}=(G^B_{N-1,N})'(G^B_{N-1})^{-1}G^B_{N-1,0}
\label{CML_B_M_BC2}
\end{align}
for $\eqref{CML_Backward_BC2}$; for $c=N$, we have
\begin{align}
G^B_{0,N}=0\label{CML_B_M}
\end{align}

\end{proposition}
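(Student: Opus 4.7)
The plan is to follow the same architecture as the proof of Theorem~\ref{CML_R_Dynamic_Forward_Proposition} (the forward counterpart) while swapping the forward ingredient Theorem~\ref{CML_Dynamic_Forward_Proposition} for its backward analogue Proposition~\ref{CMF_Dynamic_Backward_Proposition}. By Proposition~\ref{CMF_Dynamic_Backward_Proposition}, the model \eqref{CML_Dynamic_Backward} together with either boundary \eqref{CML_Backward_BC1} or \eqref{CML_Backward_BC2} is a complete description of a ZMNG $CM_c$ sequence. By Corollary~\ref{CMLCMF_Reciprocal}, reciprocity is equivalent to being both $CM_L$ and $CM_F$, so starting from a backward $CM_c$ model one only has to pin down the extra algebraic condition on the coefficients that simultaneously enforces $CM_{c'}$, where $c'$ denotes the endpoint opposite to $c$.

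Concretely, I would write the backward model in block matrix form $\mathcal{G}^B [x_k]_0^N = [e^B_k]_0^N$, with $\mathcal{G}^B$ having identity diagonal blocks, super-diagonal blocks $-G^B_{k,k+1}$ coming from \eqref{CML_Dynamic_Backward}, a column of $-G^B_{k,c}$ blocks, and one boundary block. Because $[e^B_k]$ is white Gaussian with block-diagonal covariance $G^B = \mathrm{diag}(G^B_0,\ldots,G^B_N)$ and $\mathcal{G}^B$ is invertible (identity diagonal), this yields
\begin{align*}
C^{-1} = (\mathcal{G}^B)^\top (G^B)^{-1} \mathcal{G}^B,
\end{align*}
in direct analogy with \eqref{A_Forward} of Corollary~\ref{CML_R_Dynamic_Forward_Corollary}. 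By Proposition~\ref{CMF_Dynamic_Backward_Proposition} this $C^{-1}$ automatically carries the $CM_c$ pattern; by Theorem~\ref{CML_Characterization} the sequence is simultaneously $CM_{c'}$ iff $C^{-1}$ also carries the $CM_{c'}$ pattern, and the intersection of the two patterns is precisely the cyclic tridiagonal structure \eqref{Cyclic_Tridiagonal} characterizing reciprocity by Theorem~\ref{Reciprocal_Characterization}.

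The algebra then reduces to a block multiplication that reads off the ``extra'' entries sitting in the row/column indexed by $c$. For $c=0$, the block in position $(0,k)$ of $C^{-1}$ with $k\in\{2,\ldots,N-1\}$ becomes a linear combination of $(G^B_k)^{-1}G^B_{k,0}$ and $(G^B_{k-1,k})^\top(G^B_{k-1})^{-1}G^B_{k-1,0}$; forcing it to vanish is exactly \eqref{CMF_Condition_Reciprocal_B} for $c=0$. The mirror calculation for $c=N$ yields \eqref{CMF_Condition_Reciprocal_B} for $c=N$ with the shifted index range $k\in[0,N-2)$ stated in the proposition. For the Markov addendum, Remark~\ref{Markov_Characterization} demands the fully tridiagonal form, so on top of cyclic tridiagonality one must further kill the corner block $D_0$ in \eqref{Cyclic_Tridiagonal}. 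A direct computation shows that under boundary \eqref{CML_Backward_BC1} the corner block equals the difference between the two sides of \eqref{CML_B_M_BC1}, under boundary \eqref{CML_Backward_BC2} it equals the difference between the two sides of \eqref{CML_B_M_BC2}, and for $c=N$ it collapses to $G^B_{0,N}$, producing \eqref{CML_B_M}.

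The main obstacle will be the index bookkeeping in the block product $(\mathcal{G}^B)^\top(G^B)^{-1}\mathcal{G}^B$, together with handling the two alternative boundaries \eqref{CML_Backward_BC1} and \eqref{CML_Backward_BC2} separately and verifying that the two resulting Markov conditions \eqref{CML_B_M_BC1} and \eqref{CML_B_M_BC2} are genuinely equivalent once the reciprocal constraint \eqref{CMF_Condition_Reciprocal_B} has been imposed. Everything else is a mechanical transposition of the forward proof strategy into the backward setting, with the time-reversal symmetry of the reciprocal property ensuring the structural statements survive intact.
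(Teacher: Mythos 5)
Your proposal is correct and follows essentially the route the paper intends: the paper's own Corollary~\ref{CMF_R_Dynamic_Backward_Corollary} is exactly the matrix reformulation $C^{-1}=(\mathcal{G}^B)'(G^B)^{-1}\mathcal{G}^B$ you construct, and conditions \eqref{CMF_Condition_Reciprocal_B}, \eqref{CML_B_M_BC1}--\eqref{CML_B_M} are precisely the vanishing of the off-band blocks you identify (your index ranges and the corner-block computation all check out against \eqref{L_B_1}--\eqref{F_B}). The only point to tighten is the invertibility of $\mathcal{G}^B$, which for \eqref{L_B_1} and \eqref{L_B_2} is not triangular and should be justified by the recursive solvability of the system rather than by the identity diagonal alone.
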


Proposition \ref{CMF_R_Dynamic_Backward_Proposition} can be presented as follows.

\begin{corollary}\label{CMF_R_Dynamic_Backward_Corollary}
Backward model $\eqref{CML_Dynamic_Backward}$ along with $\eqref{CML_Backward_BC1}$ or $\eqref{CML_Backward_BC2}$ govern a ZMNG reciprocal sequence iff the matrix $A$ is cyclic tri-diagonal with
\begin{align}
A=(\mathcal{G}^B)'(G^B)^{-1}\mathcal{G}^B\label{A_Backward}
\end{align}
where $G^B=\text{diag}(G^B_0, G^B_1,\ldots, G^B_N)$ and for $c=0$ the matrix $\mathcal{G}^B$ is
\begin{align}\label{L_B_1}
\left[ \begin{array}{cccccc}
I & 0 & 0 &  \cdots & 0 & -G^{B}_{0,N}\\
-G^B_{1,0} & I & -G^B_{1,2} &  \cdots & 0 & 0\\
-G^B_{2,0} & 0 & I & -G^B_{2,3} & \cdots & 0\\
\vdots & \vdots & \vdots & \vdots & \vdots & \vdots \\
-G^B_{N-1,0} & 0 & \cdots & 0 & I & -G^{B}_{N-1,N}\\
0 & 0 & 0 &  \cdots & 0 & I
\end{array}\right]
\end{align}
for $\eqref{CML_Backward_BC1}$, and $\mathcal{G}^B$ is
\begin{align}\label{L_B_2}
\left[ \begin{array}{cccccc}
I & 0 & 0 &  \cdots & 0 & 0\\
-G^B_{1,0} & I & -G^B_{1,2} &  \cdots & 0 & 0\\
-G^B_{2,0} & 0 & I & -G^B_{2,3} & \cdots & 0\\
\vdots & \vdots & \vdots & \vdots & \vdots & \vdots \\
-G^B_{N-1,0} & 0 & \cdots & 0 & I & -G^{B}_{N-1,N}\\
-G^{B}_{N,0} & 0 & 0 &  \cdots & 0 & I
\end{array}\right]
\end{align}
for $\eqref{CML_Backward_BC2}$, and for $c=N$, $\mathcal{G}^B$ is
\begin{align}\label{F_B}
\left[ \begin{array}{cccccc}
I & -G^B_{0,1} & 0 &  \cdots & 0 & -G^B_{0,N}\\
0 & I & -G^B_{1,2} &  \cdots & 0 & -G^B_{1,N}\\
0 & 0 & I & -G^B_{2,3} & \cdots & -G^B_{2,N}\\
\vdots & \vdots & \vdots & \vdots & \vdots & \vdots \\
0 & 0 & \cdots & 0 & I & -2G^{B}_{N-1,N}\\
0 & 0 & 0 &  \cdots & 0 & I
\end{array}\right]
\end{align}
In addition, the sequence is Markov iff $A$ in $\eqref{A_Backward}$ is tri-diagonal.

\end{corollary}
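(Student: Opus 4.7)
The plan is to mirror the reasoning used for the forward case in Corollary \ref{CML_R_Dynamic_Forward_Corollary}, but now stacking the backward $CM_c$ evolution together with its boundary condition into a single linear system and reading off the resulting coefficient matrix. Concretely, I would let $x=(x_0',x_1',\ldots,x_N')'$ and $e^B=(e^{B\prime}_0,\ldots,e^{B\prime}_N)'$, and write the backward dynamics $\eqref{CML_Dynamic_Backward}$ for $k \in [0,N) \setminus \{c\}$, augmented by the chosen boundary condition, as $\mathcal{G}^B x = e^B$. Row-by-row inspection of the equations $x_k - G^B_{k,k+1}x_{k+1} - G^B_{k,c}x_c = e^B_k$ should yield exactly the matrices $\eqref{L_B_1}$, $\eqref{L_B_2}$, or $\eqref{F_B}$ depending on which of $c=0$ (with BC $\eqref{CML_Backward_BC1}$ or $\eqref{CML_Backward_BC2}$) or $c=N$ is in force.

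Once the matrix representation is established, the remainder is immediate. Since $[e^B_k]$ is white Gaussian with nonsingular covariance $G^B=\text{diag}(G^B_0,\ldots,G^B_N)$, and $\mathcal{G}^B$ is invertible (the backward model uniquely recovers $[x_k]$ from the driving noise and boundary data), the covariance of $x$ is $C=(\mathcal{G}^B)^{-1}G^B(\mathcal{G}^B)^{-\prime}$, so
\begin{align*}
C^{-1} = (\mathcal{G}^B)'(G^B)^{-1}\mathcal{G}^B = A.
\end{align*}
The claims then follow by invoking Theorem \ref{Reciprocal_Characterization} ($[x_k]$ is NG reciprocal iff $C^{-1}$ is cyclic tri-diagonal, i.e., iff $A$ has the form $\eqref{Cyclic_Tridiagonal}$) and Remark \ref{Markov_Characterization} ($[x_k]$ is NG Markov iff $C^{-1}$ is tri-diagonal, i.e., iff $A$ has the form $\eqref{Tridiagonal}$).

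The main obstacle, as in the forward corollary, will be the careful bookkeeping for the rows of $\mathcal{G}^B$ where the boundary indices collide with the generic dynamics. In particular, for $c=N$ the equation for $k=N-1$ reads $x_{N-1} = G^B_{N-1,N}x_N + G^B_{N-1,c}x_c + e^B_{N-1}$, and since $x_c = x_N$ the two coefficients coalesce into $2G^B_{N-1,N}$, explaining the entry $-2G^B_{N-1,N}$ in $\eqref{F_B}$. For $c=0$ the two admissible boundary conditions $\eqref{CML_Backward_BC1}$ and $\eqref{CML_Backward_BC2}$ determine whether the extra off-diagonal entry $-G^B_{0,N}$ appears in the top row (giving $\eqref{L_B_1}$) or the entry $-G^B_{N,0}$ appears in the bottom row (giving $\eqref{L_B_2}$); I would verify each case separately. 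Once the three forms of $\mathcal{G}^B$ are confirmed, the chain of equivalences "reciprocal $\Leftrightarrow$ $C^{-1}$ cyclic tri-diagonal $\Leftrightarrow$ $A$ cyclic tri-diagonal" and the analogous one for Markov close the proof with no further calculation.
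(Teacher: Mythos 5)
Your proposal is correct and follows essentially the same route the paper intends: the corollary is just Proposition \ref{CMF_R_Dynamic_Backward_Proposition} rewritten by stacking the backward dynamics and boundary condition as $\mathcal{G}^B x = e^B$, noting $\mathcal{G}^B$ is invertible (it is a row/column permutation of a block unit-triangular matrix), so $C^{-1}=(\mathcal{G}^B)'(G^B)^{-1}\mathcal{G}^B=A$, and then applying Theorem \ref{Reciprocal_Characterization} and Remark \ref{Markov_Characterization}. Your handling of the two bookkeeping subtleties — the coalesced entry $-2G^B_{N-1,N}$ when $c=N$ and the placement of $-G^B_{0,N}$ versus $-G^B_{N,0}$ under the two boundary conditions for $c=0$ — matches the stated matrices exactly.
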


In order to derive a recursive estimator for the NG reciprocal sequence, \cite{Moura} manipulated the reciprocal model of \cite{Levy_Dynamic} to obtain simple recursive models. It can be seen that the resultant models in \cite{Moura} are actually in the form of the $CM_c$ models presented above. It, once more, demonstrates the significance of studying the reciprocal sequence from the CM viewpoint.

\section{Summary and Conclusions}\label{Section_Summary_Conclusions}

As an important special conditionally Markov (CM) sequence, the reciprocal sequence has been studied from the CM viewpoint. This fruitful angle, which is different from that of the literature, has advantages. It reveals several properties of the reciprocal sequence. Also, it leads to easily applicable results. As a result, Markov, reciprocal, and other CM sequences are unified. 

The relationship between the CM sequence and the reciprocal sequence for the general (Gaussian/non-Gaussian) case has been presented. It was shown that a NG sequence is reciprocal iff it is both $CM_L$ and $CM_F$. It demonstrates the key role of the $CM_L$ and $CM_F$ sequences in the study of the reciprocal sequence from the CM viewpoint. 

Based on $CM_c$ models, several models have been presented for the evolution of the NG reciprocal sequence. Unlike the existing reciprocal model of \cite{Levy_Dynamic}, these models are driven by white noise, which leads to simplicity and easy applicability. 

The significance and benefits of studying the reciprocal sequence as a special CM sequence have been demonstrated by the obtained results.

CM dynamic models (including the reciprocal $CM_L$ model) were studied further in \cite{CM_Part_II_B_Conf}--\cite{CM_Explicitly}, where some approaches for their parameter design in application were presented.

\subsubsection*{Acknowledgments}

Research was supported by NASA Phase03-06 through grant NNX13AD29A.


\begin{thebibliography}{99}

\medskip

\small


\bibitem{Levy_1} B. Levy and A. J. Krener. Dynamics and Kinematics of Reciprocal Diffusions,” \textit{Journal of Mathematical Physics}. Vol. 34, No. 5, pp. 1846-1875, 1993. 
\bibitem{Levy_2} B. Levy and A. J. Krener. Stochastic Mechanics of Reciprocal Diffusions. \textit{Journal of Mathematical Physics}. Vol. 37, No. 2, pp. 769-802, 1996. 
\bibitem{Picci} A. Chiuso, A. Ferrante, and G. Picci. Reciprocal Realization and Modeling of Textured Images. \textit{44th IEEE Conference on Decision and Control}, Seville, Spain, Dec. 2005. 
\bibitem{Picci2} G. Picci and F. Carli. Modelling and Simulation of Images by Reciprocal Processes. \textit{Tenth International Conference on Computer Modeling and Simulation}, Cambridge, UK, Apr. 2008.
\bibitem{Fanas1} M. Fanaswala, V. Krishnamurthy, and L. B. White. Destination-aware Target Tracking via Syntactic Signal Processing. \textit{IEEE International Conference on Acoustics, Speech and Signal Processing (ICASSP)}, Prague, Czech Republic, May 2011.
\bibitem{Fanas2} M. Fanaswala and V. Krishnamurthy. Detection of Anomalous Trajectory Patterns in Target Tracking via Stochastic Context-Free Grammer and Reciprocal Process Models. \textit{IEEE Journal of Selected Topics in Signal Processing}, Vol. 7, No. 1, pp. 76-90, 2013.
\bibitem{Simon} B. I. Ahmad, J. K. Murphy, S. J. Godsill, P. M. Langdon, and R. Hardy. Intelligent Interactive Displays in Vehicles with Intent Prediction: A Bayesian framework. \textit{IEEE Signal Processing Magazine}, Vol. 34, No. 2, 2017.
\bibitem{DD_Conf} R. Rezaie and X. R. Li. Destination-Directed Trajectory Modeling and Prediction Using Conditionally Markov Sequences. \textit{IEEE Western New York Image and Signal Processing Workshop}, Rochester, NY, USA, Oct. 2018, pp. 1-5.
\bibitem{DW_Conf} R. Rezaie and X. R. Li. Trajectory Modeling and Prediction with Waypoint Information Using a Conditionally Markov Sequence. \textit{56th Allerton Conference on Communication, Control, and Computing}, Monticello, IL, USA, Oct. 2018, pp. 486-493.
\bibitem{Krener1} A. J. Krener. Reciprocal Processes and the Stochastic Realization Problem for Acausal Systems. \textit{Modeling, Identification, and Robust Control}, C. I. Byrnes and A. Lindquist (editors), Elsevier, 1986.
\bibitem{Mehr} C. B. Mehr and J.  A. McFadden, Certain Properties of Gaussian Processes and their First-Passage Times. \textit{Journal of Royal Statistical Society (B)}, Vol. 27, pp. 505-522, 1965.
\bibitem{ABRAHAM} J. Abraham and J. Thomas. Some Comments on Conditionally Markov and Reciprocal Gaussian Processes. \textit{IEEE Trans. on Information Theory}. Vol. 27, No. 4, July 1981.
\bibitem{CM_Part_I} R. Rezaie and X. R. Li. Nonsingular Gaussian Conditionally Markov Sequences. \textit{IEEE West. New York Image and Signal Processing Workshop}. Rochester, NY, USA, Oct. 2018, pp. 1-5.
\bibitem{Carm} J-P Carmichael, J-C Masse, and R. Theodorescu. Representations for Multivariate Reciprocal Gaussian Processes. \textit{IEEE Trans. on Information Theory}, Vol. 34, No. 1, pp. 155-157, 1988. 
\bibitem{Carm2} J-P Carmichael, J-C Masse, and R. Theodorescu. Multivariate Reciprocal Stationary Gaussian Processes. \textit{Journal of Multivariate Analysis}, 23, pp. 47-66, 1987.
\bibitem{Berstein} S. Bernstein. Sur les liaisons entre les grandeurs aleatoires. \textit{Verh. des intern. Mathematikerkongr I,} Zurich, 1932.
\bibitem{Schrodinger_1} E. Schrodinger. Uber die Umkehrung der Naturgesetze. \textit{Sitz. Ber. der Preuss. Akad. Wissen., Berlin Phys. Math.} 144, 1931.
\bibitem{Schrodinger_2} E. Schrodinger. Theorie relativiste de l'electron et l'interpretation de la mechanique quantique. \textit{Ann. Inst. H. Poincare} 2, 269-310, 1932.
\bibitem{Slepian} D. Slepian. First Passage Time for a Particular Gaussian Process. \textit{Annals of Mathematical Statistics}, Vol. 32, pp. 610-612, 1961.
\bibitem{Jamison_1} B. Jamison. Reciprocal Processes: The Stationary Gaussian Case. \textit{Annals of Mathematical Statistics}, Vol. 41, No. 5, pp. 1624-1630, 1970.
\bibitem{Chay} S. C. Chay, On Quasi-Markov Random Fields, \textit{Journal of Multivariate Analysis}, Vol 2, pp. 14-76, 1972.
\bibitem{Corrected_Stationary} J-P Carmichael, J-C Masse, and R. Theodorescu, Processus Gaussiens Stationnaires Reciproques sur un Intervalle, \textit{C. R. Acad. Sc. Paris}, t. 295 (27 Sep. 1982).
\bibitem{Jamison_Reciprocal} B. Jamison. Reciprocal Processes. \textit{Z. Wahrscheinlichkeitstheorie verw. Gebiete}, vol. 30, pp. 65-86, 1974. 
\bibitem{Measure} C. Leonard, S. Rœlly, and J-C Zambrini. Reciprocal Processes. A Measure-theoretical Point of View. \textit{Probability Surveys}, Vol. 11, pp. 237–269, 2014.
\bibitem{Conforti} G. Conforti, P. Dai Pra, and S. Roelly. Reciprocal Class of Jump Processes. \textit{Journal of Theoretical Probability}, June 2014.
\bibitem{Murr} R. Murr, \textit{Reciprocal Classes of Markov Processes. An Approach with Duality Formulae}, Ph.D. thesis, Universitat Potsdam, 2012.
\bibitem{Roally} S. Rœlly. \textit{Reciprocal Processes. A Stochastic Analysis Approach}. In V. Korolyuk, N. Limnios, Y. Mishura, L. Sakhno, and G. Shevchenko, editors, Modern Stochastics and Applications, volume 90 of Optimization and Its Applications, pp. 53–67. Springer, 2014.
\bibitem{Krener_1} A. J. Krener. Reciprocal Diffusions and Stochastic Differential Equations of Second Order. \textit{Stochastics}, Vol. 24, No. 4, pp. 393-422, 1988.
\bibitem{Krener_2} A. J. Krener, R. Frezza, and B. C. Levy. Gaussian Reciprocal Processes and Self-adjoint Stochastic Differential Equations of Second Order. \textit{Stochastics and Stochastic Reports}, Vol. 34, No. 1-2, pp. 29-56, 1991.
\bibitem{Levy_Class} B. C. Levy, A. Beghi, Discrete-time Gauss-Markov Processes with Fixed Reciprocal Dynamics. \textit{Journal of Mathematical Systems, Estimation, and Control}, Vol. 4, No. 3, pp. 1-25, 1994.
\bibitem{Levy_Class} A. Beghi, Continuous-time Gauss-Markov Processes with Fixed Reciprocal Dynamics. \textit{Journal of Mathematical Systems, Estimation, and Control}, Vol. 4, No. 4, pp. 1-24, 1994.
\bibitem{Chen} J. Chen and H. L. Weinert, A New Characterization of Multivariate Gaussian Reciprocal Processes. \textit{IEEE Trans. on Automatic Control}, Vol. 38, No. 10, pp. 1601-1602, 1993. 
\bibitem{White} F. Carravetta and L. B. White. Modelling and Estimation for Finite State Reciprocal Processes. \textit{IEEE Trans. on Automatic Control}, Vol. 57, No. 9, pp. 2190-2202, 2012.
\bibitem{Carra} F. Carravetta. Nearest-neighbour Modelling of Reciprocal Chains. \textit{An International Journal of Probability and Stochastic Processes}, Vol. 80, No. 6, pp. 525-584, 2008. 
\bibitem{White_2} L. B. White and F. Carravetta. Optimal Smoothing for Finite State Hidden Reciprocal Processes. \textit{IEEE Trans. on Automatic Control}, Vol. 56, No. 9, pp. 2156-2161, 2011.
\bibitem {White_3} L B. White and H. X. Vu. Maximum Likelihood Sequence Estimation for Hidden Reciprocal Processes. \textit{IEEE Trans. on Automatic Control}, Vol. 58, No. 10, pp. 2670-2674, 2013.
\bibitem{Bacca1} E. Baccarelli and R. Cusani. Recursive Filtering and Smoothing for Gaussian Reciprocal Processes with Dirichlet Boundary Conditions. \textit{IEEE Trans. on Signal Processing}, Vol. 46, No. 3, pp. 790-795, 1998.
\bibitem{Bacca2} E. Baccarelli, R. Cusani, and G. Di Blasio. Recursive filtering and smoothing for reciprocal Gaussian processes-pinned boundary case. \textit{IEEE Trans. on Information Theory}, Vol. 41, No. 1, pp. 334-337, 1995.
\bibitem{Levy_Dynamic} B. C. Levy, R. Frezza, and A. Krener. Modeling and Estimation of Discrete-Time Gaussian Reciprocal Processes. \textit{IEEE Trans. on Automatic Control}. Vol. 35, No. 9, pp. 1013-1023, 1990. 
\bibitem{Moura} D. Vats and J. M. F. Moura. Recursive Filtering and Smoothing for Discrete Index Gaussian Reciprocal Processes. \textit{43rd Annual Conference on Information Sciences and Systems}, Baltimore, MD, USA, Mar. 2009.     
\bibitem{CM_Part_II_B_Conf} R. Rezaie and X. R. Li. Models and Representations of Gaussian Reciprocal and Conditionally Markov Sequences. \textit{Inter. Conference on Vision, Image and Signal Processing}, Las Vegas, NV, USA, Aug. 2018, pp. 65:1-65:6.
\bibitem{CM_Explicitly} R. Rezaie and X. R. Li. Explicitly Sample-Equivalent Dynamic Models for Gaussian Conditionally Markov, Reciprocal, and Markov Sequences. \textit{Inter. Conf. on Control, Automation, Robotics, and Vision Engineering}, New Orleans, LA, USA, Nov. 2018, pp. 1-6.
\bibitem{CM_Journal_Algebraically} R. Rezaie and X. R. Li. Gaussian Conditionally Markov Sequences: Algebraically Equivalent Dynamic Models. \textit{IEEE T-AES}, 2019, DOI: 10.1109/TAES.2019.2951188.
\bibitem{CM_Part_III_Journal} Rezaie, R. and Li, X. R., Gaussian Conditionally Markov Sequences: Dynamic Models and Representations of Reciprocal and Other Classes. \textit{IEEE T-SP}, May 2019, DOI: 10.1109/TSP.2019.2919410.
\bibitem{CM_SingularNonsingular} R. Rezaie and X. R. Li. Gaussian Conditionally Markov Sequences: Singular/Nonsingular. \textit{IEEE T-AC}, 2019, DOI: 10.1109/TAC.2019.2944363.
\bibitem{Thesis_Reza} R. Rezaie. \textit{Gaussian Conditionally Markov Sequences: Theory with Application}. Ph.D. Dissertation, Dept of Electrical Engineering, University of New Orleans, July 2019.
\bibitem{CM_Part_II_A} R. Rezaie and X. R. Li. Gaussian Conditionally Markov Sequences: Reciprocal Sequences. To be submitted.
\bibitem{Ackner} R. Ackner, T. Kailath. Discrete-Time Complementary Models and Smoothing. \textit{International Journal of Control}, Vol. 49, No. 5, pp. 1665-1682, May 1989.


\end{thebibliography}
\end{document}